\documentclass[a4paper,10pt]{article}
\usepackage{amsmath}
\usepackage{amssymb}
\usepackage{amsthm}
\usepackage[latin1]{inputenc}


\setlength{\parindent}{0cm}

\newcommand{\R}{\mathbb{R}}
\newcommand{\C}{\mathbb{C}}
\newcommand{\N}{\mathbb{N}}
\newcommand{\Hy}{\mathbb{H}}
\newcommand{\vo}{\mathrm{dvol}}
\newcommand{\ov}{\overline}
\newcommand{\lm}{\lambda^+_{min}}
\renewcommand{\Re}{\mathrm{Re}}
\newcommand{\s}{\mathrm{scal\,}}
\renewcommand{\d}{\mathrm{d}}

\newcommand{\bq}{\begin{equation}}
\newcommand{\bqw}{\begin{equation*}}
\newcommand{\eq}{\end{equation}}
\newcommand{\eqw}{\end{equation*}}
\newcommand{\vol}{\mathrm{vol}}

\newtheorem{lem}{Lemma}
\newtheorem{cor}[lem]{Corollary}
\newtheorem{defi}[lem]{Definition}
\newtheorem{thm}[lem]{Theorem}

\theoremstyle{definition}
\newtheorem{ex}[lem]{Example}
\newtheorem{rem}[lem]{Remark}
\date{}
\title{Solutions of the equation of a spinorial Yamabe-type problem on manifolds of bounded geometry\footnotetext{MSC: 53C27, 53A30, 35R01} \footnotetext{Keywords: spin conformal geometry, open manifolds with bounded geometry, PDE}
\footnotetext{Acknowledgement: The main part of this work during the author's stay at the university of Regensburg. The author wants to thank Bernd Ammann for many fruitful discussions.}}
\author{Nadine Gro\ss e}

\begin{document}

\maketitle
\begin{abstract}
We consider a spinorial Yamabe-type problem on open manifolds of bounded geometry. The aim is to study the existence of solutions to the associated Euler-Lagrange-equation. We show that under suitable assumptions such a solution exists. As an application, we prove that existence of a solution implies the conformal Hijazi inequality for the underlying spin manifold.
\end{abstract}

\section{Introduction}
Let $(M,g,\sigma)$ be an $n$-dimensional connected Riemannian spin manifold with corresponding classical Dirac operator $D=D_g$. The spin structure $\sigma$ is supposed to be fixed.\\
The Dirac operator has a similar behaviour under conformal transformations as the conformal Laplacian, that is used to analyze the Yamabe invariant, see e.g. \cite{LP}. Thus, in the spirit of the Yamabe invariant, a conformal invariant of the Dirac operator, called $\lm$ was examined in \cite{Amm03a, Ammha}.
In \cite{NG} we generalized these considerations to open manifolds.\\

Many of the properties of the Yamabe invariants could be proven for $\lm$ as well, e.g. the value for the standard sphere is the highest possible one, cf. Theorem \ref{subset}. But other questions are still open, e.g. whether there exists closed spin manifolds, not conformally diffeomorphic to the standard sphere (with the same dimension) but with the $\lm$-invariant of this standard sphere.\\
The methods that can be used are sometimes similar to the ones of the Yamabe invariant, but since we work with spinors there is e.g. no maximum principle.\\

Firstly, a definition of $\lm$ is given by:
 
\begin{defi}
\[\lm(M,g,\sigma)=\inf \left\{ \frac{\Vert D\phi \Vert_{q_{crit}}^2}{(D\phi,\phi)}\ \Big|\ \phi\in C_c^\infty(M,S), (D\phi,\phi)>0 \right\}\]
with $q_{crit}=\frac{2n}{n+1}$ and $C_c^\infty(M,S)$ denotes the set of compactly supported smooth spinors.
\end{defi}

For $\ov{g}=f^2g$ ($f\in C^\infty_{>0}(M)$) and $\ov{\phi}=f^{-\frac{n-1}{2}}\phi$ (using the identification of spinor bundles w.r.t. conformal metrics \cite[Sect. 4.1]{Hija}) we have $\int_M \langle D_g\phi, \phi\rangle \vo_g=\int_M \langle D_{\ov{g}}\ov{\phi}, \ov{\phi}\rangle \vo_{\ov{g}}$\ and $\int_M |D_g\phi|^{q_{crit}}\vo_g = \int_M |D_{\ov{g}}\ov{\phi}|^{q_{crit}}\vo_{\ov{g}}$. Thus, $\lm$ is actually a conformal invariant.\\
If it is clear from the context to which metric we refer, we will shortly write $\lm(M)$.\\

The corresponding Euler-Lagrange-equation is 
\bq D\phi=\lm |\phi|^{p_{crit}-2}\phi \textrm{\ with\ } \Vert \phi \Vert_{p_{crit}}=1,\label{EL}\eq
where $p_{crit}=\frac{2n}{n-1}$, \cite[Lem. 2.7]{Amm03a}. On closed manifolds, the existence of a solution of \eqref{EL} was shown in \cite[Thm. 1.6.]{Amm03a} for $\lm (M)<\lm (S^n)$ using the compactness of the subcritical Sobolev embeddings.\\
On open manifolds, such Sobolev embeddings do not exist in general or if they exist, they aren't always compact.\\

The aim of this paper is to prove the existence of a solution under suitable assumptions. The idea is to adapt techniques we used in \cite{NG3} to prove the existence of a solution of the Yamabe equation to the spinorial case. We will use weighted Sobolev spaces, where compactness holds for manifolds of bounded geometry, i.e. manifolds where the injectivity radius is positive and the curvature and its derivatives of all orders are bounded. The weighted Sobolev embeddings for the spaces of spinors can be found in Appendix \ref{app}.

\begin{thm}\label{main1}
Let $(M^n,g,\sigma)$ be an open connected Riemannian spin manifold of bounded geometry satisfying $\ov{\lm(M)}>\lm (M)$. Moreover, we assume that there is an $L^{[q_{crit}, q_{crit}+\epsilon]}$-lower bound for an $\epsilon>0$. Then there exists a spinor $\phi\in H_1^{q_{crit}}\cap L^{p_{crit}}$ smooth outside its zero set and with $D\phi=\lm |\phi|^{p_{crit}-2}\phi$ and $\Vert \phi\Vert_{p_{crit}}=1$. Moreover, $\phi$ is locally in $C^{1,\alpha}$.
\end{thm}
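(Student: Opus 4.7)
The plan is to adapt the subcritical approximation scheme of \cite{Amm03a} to the open setting, using the weighted Sobolev embeddings from the Appendix (which are compact thanks to bounded geometry). First I introduce, for each $q\in(q_{crit},q_{crit}+\epsilon]$, the subcritical analogue
\bqw \lm_q(M):=\inf\left\{ \frac{\Vert D\phi\Vert_q^2}{(D\phi,\phi)}\ \Big|\ \phi\in C_c^\infty(M,S),\ (D\phi,\phi)>0 \right\}, \eqw
whose Euler--Lagrange equation reads $D\phi_q=\lm_q|\phi_q|^{p_q-2}\phi_q$ with $\Vert\phi_q\Vert_{p_q}=1$ and $p_q=q/(q-1)$. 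Since $q>q_{crit}$ is subcritical, the direct method in a suitable weighted $H_1^q$-space yields a minimizer $\phi_q$; the equation together with the $L^{p_q}$-normalization then gives a uniform $H_1^q$-bound on the family $(\phi_q)_q$.

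Next I would let $q\searrow q_{crit}$ and extract a subsequence with $\phi_q\rightharpoonup\phi$ in $H_1^{q_{crit}}$, strongly on compact sets, and almost everywhere, together with $\lm_q\to\lm(M)$. Provided $\phi\not\equiv 0$ and the $L^{p_{crit}}$-mass is preserved in the limit, passing to the limit in the subcritical equation yields the desired weak solution of \eqref{EL} with $\Vert\phi\Vert_{p_{crit}}=1$.

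The bulk of the work is the concentration-compactness analysis needed to exclude $\phi\equiv 0$. Two failure modes must be ruled out: (i) the $L^{p_{crit}}$-mass of $\phi_q$ escapes along the ends of $M$; (ii) it concentrates into a bubble that, after rescaling, sits on an asymptotic model space. Mode (i) is controlled by the $L^{[q_{crit},q_{crit}+\epsilon]}$-lower bound, which provides uniform coercivity of the Rayleigh quotient on a full interval of exponents and is incompatible with loss of mass at infinity along the subcritical sequence. Mode (ii) is exactly where the hypothesis $\ov{\lm(M)}>\lm(M)$ enters: rescaling around a concentration point produces a nontrivial weak solution on the asymptotic model, whose Rayleigh quotient is bounded below by $\ov{\lm(M)}$, contradicting $\lm_q\to\lm(M)$.

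Once $\phi\not\equiv 0$ is identified as a weak solution of \eqref{EL}, a standard elliptic bootstrap for the Dirac operator with right-hand side $\lm|\phi|^{p_{crit}-2}\phi$ yields smoothness off the zero set (where the nonlinearity is smooth in $\phi$) and $C^{1,\alpha}$-regularity locally on $M$. I expect the main obstacle to be the concentration-compactness step: quantifying the Rayleigh quotient of a concentrating bubble on an open manifold of bounded geometry and matching it precisely to the invariant $\ov{\lm(M)}$ requires a careful blow-up analysis together with spin-preserving cut-offs that keep $(D\phi_q,\phi_q)$ positive.
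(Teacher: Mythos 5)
Your overall strategy (subcritical approximation plus a concentration--compactness analysis) is in the right spirit, but two steps as written do not go through, and they are precisely the points where the paper's proof has to work hardest.

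First, the existence of a minimizer for the unweighted subcritical functional $\lm_q$ is not justified. On an open manifold of bounded geometry the subcritical embedding $H_1^q\hookrightarrow L^p$ is continuous but \emph{not} compact; compactness only holds for the weighted embedding $H_1^q\hookrightarrow\rho^\alpha L^p$ with a weight $\rho\to 0$ at infinity (Theorem \ref{Skrzspin}). Saying you run the direct method ``in a suitable weighted $H_1^q$-space'' does not resolve this: weak $H_1^q$-convergence plus strong $\rho^\alpha L^p$-convergence of a minimizing sequence for the \emph{unweighted} functional does not pass the constraint $(D\phi_i,\phi_i)\to\lm_q^{-1}$ to the limit, because that pairing is controlled by $\Vert\rho^\alpha\phi_i\Vert_p\,\Vert\rho^{-\alpha}D\phi_i\Vert_q$ and the second factor is not bounded for the unweighted problem. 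This is exactly why the paper modifies the functional itself to $\lambda_q^\alpha$ (Definition \ref{defwsp}), with $\Vert\rho^{-\alpha}D\phi\Vert_q^2$ in the numerator, and then performs a two-parameter limit: first $q\to q_{crit}$ at fixed $\alpha>0$ (Lemma \ref{Dconv}), then $\alpha\to 0$ (Lemma \ref{Dinfmass}). Your scheme has only one parameter and never actually produces the approximate solutions it wants to pass to the limit.

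Second, the roles you assign to the two hypotheses are essentially interchanged. The $L^{[q_{crit},q_{crit}+\epsilon]}$-lower bound is \emph{not} what prevents loss of mass at infinity: on a homogeneous space one can translate a fixed test spinor off to infinity without changing any of the norms entering the lower bound, so coercivity of the Rayleigh quotient is perfectly compatible with escape of mass. Its actual role is to give $\lambda_q^\alpha>0$ and $\limsup_{q\to q_{crit}}\lambda_q>0$ (Lemma \ref{pos}) and to convert bounds on $\Vert D\phi_i\Vert_q$ into $H_1^q$-bounds. Conversely, a concentrating bubble does not live on an ``asymptotic model space'' with invariant $\ov{\lm(M)}$: since the rescaling parameter $\delta_p\to 0$, the blow-up limit is always a spinor on Euclidean $\R^n$, and the contradiction is with $\lm(\R^n)=\lm(S^n)>\lambda^\alpha_{q_{crit}}$, which follows from $\ov{\lm(M)}>\lm(M)$ via Theorem \ref{subset} and Lemma \ref{Dine}. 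The hypothesis $\ov{\lm(M)}>\lm(M)$ is instead what excludes drift of mass to infinity at the original scale in the final limit $\alpha\to 0$, via a cut-off $\eta_R$ supported in $M\setminus B_R$ and the resulting estimate $\lm(M)^2\geq\lm(M\setminus B_R)\,\lm(M)$. Without these corrections the concentration--compactness step, which you correctly identify as the crux, cannot be closed.
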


Here, $\ov{\lm}$ denotes an invariant of the manifold at infinity, see Def. \ref{lminf}.

\begin{defi} We say that there is an {\emph{$L^q$-lower bound}}, if there exists a constant $c_q>0$ such that \[\Vert \phi\Vert_q\leq c\Vert D\phi\Vert_q\] 
for all $\phi\in H_1^q\cap \mathrm{im}_{L^q}(D)$ where $\mathrm{im}_{L^q}(D)$ denotes the image of $D: L^q\to L^q$.\\
Moreover, if there is an $L^{\tilde{q}}$-lower bound for all $\tilde{q}\in [q, q']$, we say that there is an $L^{[q,q']}$-lower bound.\end{defi}

\begin{rem} Note that although we call it a lower bound we allow $L^q$-harmonic spinors.
\end{rem}

We will prove Theorem \ref{main1} in Section \ref{pr} by considering corresponding weighted subcritical problems. There, the $L^q$-lower bounds will ensure the positivity of $\lambda_q$, see Lemma \ref{pos}.

{\bf Question.} It would be nice to see whether having an $L^{q}$-lower bound implies an $L^{[q, q+\epsilon]}$-lower bound. Or whether invertibility of the Dirac operator acting on $L^q$ and $im_{L^q}D$, respectively, is an open property in $q$?\\
One knows e.g. that the $L^p$-spectrum of the standard Laplacian is independent of $p$ if the Ricci curvature is bounded from below and the volume has subexponential growth (cf. \cite{Stu93} or \cite{Char05}). Maybe one can hope for a similar result for the Dirac operator.\\

The assumption on the $\lm$-invariant at infinity can be dropped when considering homogeneous manifolds of positive scalar curvature, see Theorem \ref{main2}.\\

As an application we show in Section \ref{cohij} that existence of a solution of the Euler-Lagrange equation implies the conformal Hijazi inequality, an inequality that compares $\lm$ with the Yamabe invariant $Q$, cf. Section \ref{cohij}:

\begin{thm}\label{Hijsum}
Let $(M^n,g,\sigma)$ be an open connected Riemannian spin manifold of bounded geometry with an $L^{[q_{crit}, q_{crit}+\epsilon]}$-lower bound for an $\epsilon>0$. Moreover, either let $\ov{\lm(M)}>\lm (M)$ or let $M$ be homogeneous with positive scalar curvature (cf. Theorem \ref{main2}). Then, 
\[\lm(M,g,\sigma)^2\geq \frac{n}{4(n-1)} Q(M,g).\]
\end{thm}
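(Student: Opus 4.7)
The plan is to use the solution $\phi$ produced by Theorem~\ref{main1} (or Theorem~\ref{main2} in the homogeneous case) and perform a conformal change of metric that turns it into a unit-length eigenspinor; the classical Friedrich--Hijazi argument in that conformal gauge then produces a lower bound in terms of the Yamabe invariant.

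Using $p_{crit}-2 = 2/(n-1)$, set $f := |\phi|^{2/(n-1)}$ and $\ov{g} := f^2 g = |\phi|^{4/(n-1)} g$ on the open set $M \setminus N$ where $N = \phi^{-1}(0)$. Under the spinor bundle identification from the introduction, $\ov{\phi} := f^{-(n-1)/2}\phi = |\phi|^{-1}\phi$ has constant pointwise length $|\ov{\phi}|_{\ov{g}} = 1$, and the conformal covariance of $D$ combined with $|\phi|^{p_{crit}-2} = f\,|\ov{\phi}|^{p_{crit}-2}$ collapses \eqref{EL} to the linear eigenvalue equation $D_{\ov{g}}\ov{\phi} = \lm\,\ov{\phi}$. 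Moreover $\vo_{\ov{g}} = |\phi|^{p_{crit}}\vo_g$, so $\vol_{\ov{g}}(M\setminus N) = \Vert\phi\Vert_{p_{crit}}^{p_{crit}} = 1$.

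Next I would invoke the Schr\"odinger--Lichnerowicz formula and the pointwise Friedrich inequality $|\nabla\psi|^2 \geq \frac{1}{n}|D\psi|^2$ in the metric $\ov{g}$, applied to $\ov{\phi}$. Integrating the Lichnerowicz identity against $\ov{\phi}$ and using $D_{\ov{g}}\ov{\phi} = \lm\ov{\phi}$, $|\ov{\phi}|_{\ov{g}}=1$, and $\vol_{\ov{g}}=1$ yields
\[ \lm^2 \;\geq\; \frac{n}{4(n-1)} \int_{M\setminus N} \ov{s}\,\vo_{\ov{g}}.\]
Setting $u := |\phi|^{(n-2)/(n-1)}$, so that $\ov{g} = u^{4/(n-2)} g$, the standard conformal Laplacian identity (multiplied by $u$ and integrated) gives
\[ \int_{M\setminus N}\ov{s}\,\vo_{\ov{g}} \;=\; \int_M \left( \frac{4(n-1)}{n-2}|\nabla u|^2 + \s_g\, u^2 \right)\vo_g,\]
and since $\Vert u\Vert_{2n/(n-2)}^{2n/(n-2)} = \Vert\phi\Vert_{p_{crit}}^{p_{crit}} = 1$, the right-hand side is bounded below by $Q(M,g)$ via the variational characterisation of the Yamabe invariant, producing the claimed inequality.

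The hard part will be making these manipulations rigorous, since $\phi$ is only $C^{1,\alpha}_{loc}$ and may have a nontrivial zero set, so $\ov{g}$ degenerates on $N$ and $u$ is not a priori a smooth compactly supported test function. To deal with $N$ I would introduce a cutoff $\chi_\epsilon$ vanishing in a neighborhood of $N$, apply the integrated Lichnerowicz formula to $\chi_\epsilon\ov{\phi}$, and pass to the limit using Kato's inequality $|\nabla|\phi|| \leq |\nabla\phi|$ together with $\phi\in H_1^{q_{crit}}\cap L^{p_{crit}}$ to kill the error terms; non-compactness is treated analogously by truncation at infinity, which is unproblematic because bounded geometry provides exhausting cutoffs with uniformly bounded derivatives. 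Finally, an $H^1$-density argument, again exploiting bounded geometry, turns $u$ into an admissible limit of $C_c^\infty$-competitors in the variational definition of $Q(M,g)$, justifying the last inequality.
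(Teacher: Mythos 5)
Your overall structure matches the paper's exactly: the paper proves this theorem by combining the existence results (Theorems \ref{main1} and \ref{main2}) with a separate statement (Theorem \ref{Hij-lam}) asserting that \emph{any} solution $\phi\in H_1^q\cap L^p$ of the Euler--Lagrange equation forces $\lambda^2\geq \frac{n}{4(n-1)}Q$. Where you diverge is in how that second step is executed. You run the classical two-gauge Hijazi argument: pass to $\ov{g}=|\phi|^{4/(n-1)}g$ on $M\setminus\phi^{-1}(0)$, turn $\phi$ into a unit-length eigenspinor of $D_{\ov g}$ with eigenvalue $\lm$ and unit total volume, apply the integrated Lichnerowicz formula with the pointwise estimate $|\nabla\psi|^2\geq\frac1n|D\psi|^2$, and then convert $\int\ov{s}\,\vo_{\ov g}$ into the Yamabe energy of $u=|\phi|^{(n-2)/(n-1)}$. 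The paper instead stays entirely in the original metric, tests the conformal Laplacian against $u=|\phi|^{(n-2)/(n-1)}$ directly, and absorbs the gradient term $|\d|\phi||^2$ using the refined Kato inequality of \cite{CGH} together with the Friedrich connection $\nabla^f$ with $f=\lambda|\phi|^{p-2}$. The two computations are algebraically equivalent (your Friedrich-plus-conformal-covariance step is exactly what the refined Kato inequality packages in one gauge), so your route is legitimate; what the paper's one-gauge version buys is that it never has to define $D_{\ov g}$ or identify spinor bundles for a metric that degenerates on the zero set, and -- more importantly -- it arranges the singular terms so that the final integrand $|\phi|^{2\alpha-2}\bigl(\tfrac{n}{n-1}|\d|\phi||^2-|\nabla^{f}\phi|^2\bigr)$ is pointwise nonpositive, so the needed integrability ($\int|\phi|^\alpha L|\phi|^\alpha<\infty$, $\nabla u\in L^2$) is extracted \emph{a posteriori} from the chain of inequalities rather than assumed.

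The one place where your version is materially harder than you suggest is the cutoff near $N=\phi^{-1}(0)$. The boundary term in the integrated Lichnerowicz formula for $\chi_\epsilon\ov\phi$ is essentially $\int|\d\chi_\epsilon|^2_{\ov g}\vo_{\ov g}$ (a capacity of $N$ in the degenerate metric $\ov g$); writing it in the background metric it carries the weight $|\phi|^{-2/(n-1)}$, which blows up on $N$, so ``Kato plus $\phi\in H_1^{q_{crit}}\cap L^{p_{crit}}$'' does not obviously kill it -- you would need quantitative information on the vanishing order or the codimension of $N$. You should either supply such an estimate or reorganize the argument as the paper does, so that the terms that are singular on $N$ enter only through the refined Kato inequality with a favorable sign. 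The same remark applies to your final density step: a priori $u$ is only in $L^{2n/(n-2)}$ with $\d u$ not obviously in $L^2$, and $\int \s u^2$ need not converge absolutely, so the comparison with the $C_c^\infty$-infimum defining $Q$ must be set up so that every inequality points the right way before finiteness is known.
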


The outline of the paper is as follows: First we collect in Section \ref{pre} some properties of~$\lm$. In Section \ref{pr} we will prove the Theorem \ref{main1} introducing corresponding weighted subcritical problems. In the last section we show that the existence of a solution of the Euler-Lagrange equation of $\lm$ ensures that the conformal Hijazi inequality holds for the underlying manifold.\\
The weighted Sobolev embeddings needed in Section \ref{pr} are explained in the Appendix.
 
\section{Preliminaries on $\lm$}\label{pre}

We shortly collect known properties of~$\lm$: 

\begin{thm}\label{subset}\ \!\cite{NG, NGDiss}
Let $\Omega_1\subset \Omega_2$ be open subsets of $(M,g,\sigma)$ equipped with the induced metric and spin structure. Then,
\[\lm(M)\leq \lm(\Omega_2)\leq \lm(\Omega_1)\leq \lm(S^n)=\frac{n}{2}\omega_n^{\frac{1}{n}},\]
where $S^n$ is the standard sphere with volume $\omega_n$.
\end{thm}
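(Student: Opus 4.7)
The chain splits naturally into three independent pieces: the monotonicity $\lm(M) \leq \lm(\Omega_2) \leq \lm(\Omega_1)$, the comparison $\lm(\Omega_1) \leq \lm(S^n)$, and the explicit identification $\lm(S^n) = \frac{n}{2}\omega_n^{1/n}$.

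The monotonicity I would settle by trivial extension from the variational definition. Any $\phi \in C_c^\infty(\Omega_1, S)$ extends by zero to an element of $C_c^\infty(\Omega_2, S) \subset C_c^\infty(M, S)$, and the quantities $\Vert D\phi\Vert_{q_{crit}}$, $(D\phi, \phi)$, and the sign condition $(D\phi,\phi) > 0$ are all computed over $\supp \phi$, so the extension preserves the Rayleigh quotient. Hence the admissible family in the infimum only grows as we pass from $\Omega_1$ to $\Omega_2$ to $M$, and the infimum can only decrease.

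For the upper bound by $\lm(S^n)$ I would use a concentration-and-transplantation argument. Fix any $p \in \Omega_1$ and a geodesic ball $B_r(p) \subset \Omega_1$, and trivialize the spinor bundle over it by radial parallel transport in normal coordinates. Since $S^n$ minus a point is conformally diffeomorphic to $(\R^n, g_{\mathrm{eucl}})$ via stereographic projection and $\lm$ is conformally invariant (as recorded right after the definition), $\lm(S^n)$ equals the infimum of the same quotient taken over $C_c^\infty(\R^n, S)$. Pick a near-minimizer $\psi_\epsilon$ there, rescale it to have support in $B_{r/2}(0) \subset \R^n$, and transplant through the trivialization to a test spinor $\phi_{\epsilon,r} \in C_c^\infty(\Omega_1, S)$. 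Because in normal coordinates the metric is $\delta_{ij} + O(|x|^2)$ and the spin connection differs from the flat one by $O(|x|)$, the numerator and denominator of the quotient for $\phi_{\epsilon,r}$ differ from those for $\psi_\epsilon$ by $o(1)$ as $r \to 0$. Letting $r, \epsilon \to 0$ yields $\lm(\Omega_1) \leq \lm(S^n)$.

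The value $\lm(S^n) = \frac{n}{2}\omega_n^{1/n}$ itself is then realized by a Killing spinor $\phi$ on $S^n$ satisfying $\nabla_X \phi = \tfrac{1}{2} X \cdot \phi$, which gives $D\phi = -\tfrac{n}{2}\phi$ with $|\phi|$ constant, so the Rayleigh quotient evaluates explicitly to $\tfrac{n}{2}\omega_n^{1/n}$; the matching lower bound on the closed sphere is the Friedrich/Hijazi eigenvalue inequality. The main technical obstacle lies in the transplantation step: one must quantitatively control that under the concentrating scaling the curvature corrections to the Dirac operator, the spin connection, and the volume form all leave only a $o(1)$ residual in both the $L^{q_{crit}}$ norm of $D\phi_{\epsilon,r}$ and the pairing $(D\phi_{\epsilon,r}, \phi_{\epsilon,r})$, so that the limiting ratio is exactly $\lm(S^n)$ rather than a strictly larger constant.
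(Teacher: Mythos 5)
The paper does not prove this theorem; it is quoted from \cite{NG, NGDiss} (and ultimately goes back to Ammann's work in the closed case), so there is no in-paper argument to compare against. Your proposal reconstructs exactly the standard proof: monotonicity by extension by zero of test spinors (correct and complete as stated), the bound $\lm(\Omega_1)\leq\lm(S^n)$ by transplanting rescaled near-minimizers from $\R^n\cong S^n\setminus\{pt\}$ into a small normal-coordinate ball with $O(|x|)$ control on the spin connection, and the evaluation on the round sphere via Killing spinors. Two small points of precision. First, the Killing spinor with $\nabla_X\phi=\tfrac12 X\cdot\phi$ satisfies $D\phi=-\tfrac n2\phi$, which violates the admissibility condition $(D\phi,\phi)>0$; you need the Killing constant $-\tfrac12$, giving $D\phi=+\tfrac n2\phi$, and then the quotient indeed evaluates to $\tfrac n2\omega_n^{2/q_{crit}-1}=\tfrac n2\omega_n^{1/n}$. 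Second, the lower bound $\lm(S^n)\geq\tfrac n2\omega_n^{1/n}$ is not just Friedrich's inequality for the round metric: since $\lm$ is a conformal invariant defined by an infimum over all test spinors, one needs the identification $\lm=\inf_{\ov g\in[g]}\lambda_1^+(\ov g)\vol(\ov g)^{1/n}$ (\cite[Prop.~2.6]{Amm03a}, quoted in Section \ref{cohij}) together with the Hijazi inequality applied to every metric in the conformal class and the known value $Q(S^n)=n(n-1)\omega_n^{2/n}$ of the Yamabe invariant of the sphere. With these two clarifications the sketch is sound and matches the cited proof.
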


We will further need the notion of the Yamabe invariant at infinity. With the help of this invariant one can ensure convergence of a minimizing sequence in the critical problem.

\begin{defi}\ \!\cite[Def. 1.3]{NG}\label{lminf} Let $M$ be open, connected and complete. Fix $z\in M$. Denote by $B_R\subset M$ the ball around $z$ with radius $R$ w.r.t. the metric $g$. Then,
\[\ov{\lm(M,g,\sigma)}:=\lim_{R\to \infty} \lm(M\setminus B_R, g, \sigma).\] 
\end{defi}

Existence of the limit follows from Theorem \ref{subset}. Moreover, the definition is independent of the choice of $z$ and we have $\ov{\lm(M)}\geq \lm(M)$.

\section{The weighted subcritical problem}\label{pr}

In this section, we will prove Theorem \ref{main1}.\\

{\bf Strategy.} The method itself is similar to the one we used for the Yamabe equation \cite{NG3} but the proofs of the steps themselves have to be adapted to the $\lm$-invariant involving the Dirac operator.\\ The main idea is again to consider a weighted subcritical problem, see Definition \ref{defwsp}.\\
Firstly, in Lemma \ref{Dwei} we prove the existence of a solution to the weighted subcritical problem, see Def. \ref{defwsp} for $\alpha>0$ and $q>q_{crit}$.  Then, convergence is obtained in a two-step process: In Lemma \ref{Dconv}, a solution of the weighted critical problem ($\alpha>0$, $q=q_{crit}$) is obtained. After that, we obtain in Lemma \ref{Dinfmass} a solution to the (unweighted) critical problem ($\alpha=0$, $q=q_{crit}$).\\

We fix a point $z\in M$. 
Let $\rho\in C^\infty (M)$ be a radial (w.r.t. $z\in M$) and admissible weight, cf. Def. \ref{weight}, such that $|\rho|\leq 1$, $\rho(x)\to 0$ as $r=dist (x,z)\to \infty$. We choose $\rho\sim e^{-r}$, see Rem. \ref{exwe}.ii.
 
\begin{defi}\label{defwsp} 
 Let
\[ \lambda^\alpha_q=\lambda^\alpha_q (M,g,\sigma):=\inf\left\{ \frac{\Vert \rho^{-\alpha}D\phi \Vert_q^2}{(D\phi,\phi)}\Big|\ \phi\in C_c^\infty(M,S),\ (D\phi,\phi)>0 \right\}\]
where $\alpha\geq 0$ and $q\in [q_{crit}=\frac{2n}{n+1},2)$. Moreover, $\lambda_q:=\lambda_q^{\alpha=0}$. 
\end{defi}

Note that  $\lm:=\lambda_{q_{crit}}^{\alpha=0}$.

\begin{rem}\hfill\label{remim}\\
\textbf{i)}\ \ If we decompose $C_c^\infty (M,S) \ni \phi=\phi_i+\phi_k$ such that $\phi_k\in \ker_{L^q} D$ and $\phi_i\in im_{L^q} D$, we obtain 
\[ \frac{\Vert \rho^{-\alpha }D\phi\Vert_q^2}{(D\phi,\phi)}=\frac{\Vert \rho^{-\alpha }D\phi_i\Vert_q^2}{(D\phi_i,\phi)}=\frac{\Vert \rho^{-\alpha }D\phi_i\Vert_q^2}{(\phi_i,D\phi)}=\frac{\Vert \rho^{-\alpha }D\phi_i\Vert_q^2}{(\phi_i,D\phi_i)}.\]
Thus, for complete manifolds we can also take the infimum over all spinors $\phi\in im_{L^q} D\cap \rho^\alpha L^p \cap \rho^{-\alpha}H_1^q$.\\
\textbf{ii)} The Euler-Lagrange equation reads 
\[D(\lambda^\alpha_q \phi- \rho^{-\alpha q}|D\phi|^{q-2}D\phi)=0\mathrm{\ with\ }\Vert\rho^{-\alpha}\!D\phi\Vert_q=1.\]
As in \cite[Lem. 2.7]{Amm03a}, we obtain a dual equation by setting $\psi=\rho^{-\alpha q}|D\phi|^{q-2}D\phi$:
\bqw
D\psi=\lambda^\alpha_q\rho^{\alpha p}|\psi|^{p-2}\psi\mathrm{\ with\ }\Vert\rho^\alpha\psi\Vert_p=1.\eqw
\textbf{iii)} Instead of considering the above invariants, we could replace the denominator by the absolute value of the former expression, i.e. $|(D\phi, \phi)|$, and allow all $\phi$ with $(D\phi,\phi)\ne 0$. For the corresponding invariant everything below works as well. These invariants might be in general smaller than the corresponding $\lambda^\alpha_q$.
\end{rem}

\begin{lem}\label{Dine}\hfill\\
\textbf{i)}\ \ \ For $\alpha\leq \beta$, $\lambda^\alpha_q \leq \lambda^\beta_q$. Moreover, $\lim_{\alpha\to 0}\lambda^\alpha_q=\lambda_q$. \\
\textbf{ii)}\ \ $\lambda^\alpha_q\geq \limsup_{s\to q}\lambda^\alpha_s$
\end{lem}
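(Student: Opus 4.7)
For part (i), the monotonicity is immediate from the pointwise inequality $\rho^{-\alpha}\leq \rho^{-\beta}$ on $M$ when $\alpha\leq\beta$, which holds because $0<\rho\leq 1$. This gives $\Vert \rho^{-\alpha}D\phi\Vert_q\leq \Vert \rho^{-\beta}D\phi\Vert_q$ for every admissible test spinor $\phi$, and taking the infimum yields $\lambda^\alpha_q\leq\lambda^\beta_q$.

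For the convergence $\lim_{\alpha\to 0^+}\lambda^\alpha_q=\lambda_q$, the monotonicity already supplies $\liminf_{\alpha\to 0^+}\lambda^\alpha_q\geq \lambda^0_q=\lambda_q$. For the reverse inequality, I would fix $\epsilon>0$ and choose $\phi\in C_c^\infty(M,S)$ with $(D\phi,\phi)>0$ and $\Vert D\phi\Vert_q^2/(D\phi,\phi)<\lambda_q+\epsilon$. Since $\supp \phi$ is compact and $\rho$ is smooth and strictly positive, $\rho$ is bounded below by some positive constant on $\supp \phi$, so $\rho^{-\alpha}\to 1$ uniformly on $\supp D\phi$ as $\alpha\to 0^+$. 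Dominated convergence then gives $\Vert \rho^{-\alpha}D\phi\Vert_q\to \Vert D\phi\Vert_q$, hence $\limsup_{\alpha\to 0^+}\lambda^\alpha_q\leq \lambda_q+\epsilon$, and letting $\epsilon\to 0$ closes the argument.

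Part (ii) is an upper-semicontinuity statement in the exponent, and I would prove it by the same test-spinor strategy. Given $\epsilon>0$, pick a compactly supported $\phi$ with $(D\phi,\phi)>0$ such that $\Vert \rho^{-\alpha}D\phi\Vert_q^2/(D\phi,\phi)<\lambda_q^\alpha+\epsilon$. The section $\rho^{-\alpha}D\phi$ is smooth, bounded, and compactly supported, so on the fixed finite-volume set $\supp \phi$ the function $s\mapsto \Vert \rho^{-\alpha}D\phi\Vert_s$ is continuous at $s=q$ (standard interpolation/dominated convergence on a set of finite measure with an $L^\infty$ integrand). Hence for all $s$ sufficiently close to $q$, the same $\phi$ witnesses $\lambda_s^\alpha\leq \lambda_q^\alpha+2\epsilon$, giving $\limsup_{s\to q}\lambda_s^\alpha\leq \lambda_q^\alpha+2\epsilon$, and $\epsilon\to 0$ concludes.

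There is no real obstacle here: the entire argument rests on the fact that test spinors are compactly supported, so the weights $\rho^{-\alpha}$ remain bounded on their supports and the $L^s$-norms depend continuously on both $\alpha$ and $s$. The only small point to watch is that one must choose near-minimizers \emph{before} perturbing the parameter, so that the continuity argument can be applied to a single fixed $\phi$.
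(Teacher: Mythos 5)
Your proof is correct and follows essentially the same route as the paper: monotonicity from $0<\rho\leq 1$, and for both limits the continuity of $\alpha\mapsto\Vert\rho^{-\alpha}D\phi\Vert_q$ and $s\mapsto\Vert\rho^{-\alpha}D\phi\Vert_s$ for a fixed compactly supported test spinor, combined with the infimum characterization. The paper states these continuity facts without spelling out the near-minimizer bookkeeping, which you make explicit; no substantive difference.
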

\begin{proof}\hfill\\
i) From $0< \rho\leq 1$, we obtain $\Vert \rho^{-\alpha}D\phi\Vert_q\leq \Vert \rho^{-\beta}D\phi\Vert_q$ for $\alpha\leq \beta$. Since $\lambda^\alpha_q$ is always non-negative, $\lambda_q^\alpha\leq\lambda_q^\beta$ and $\lim_{\alpha\to 0}\lambda_q^\alpha\geq \lambda_q$. Since $\lim_{\alpha\to  0} \Vert \rho^{-\alpha} D\phi\Vert_q=\Vert D\phi\Vert_q$, we obtain $\lim_{\alpha\to 0}\lambda_q^\alpha= \lambda_q$.\\
ii) Since $\Vert \rho^{-\alpha} D\phi\Vert_s\to\Vert \rho^{-\alpha}D\phi\Vert_ q$ for $s\to q$. We have $\lambda^\alpha_q\geq \limsup_{s\to q}\lambda^\alpha_s$. 
\end{proof}

Next, we show that under suitable assumptions we can ensure the positivity of $\lambda_s$:

\begin{lem}\label{pos} Assume that the Sobolev embedding $H_1^q\hookrightarrow \rho^\alpha L^p$ is continuous where $q$ and $p$ are conjugate, $2\leq p\leq p_{crit}=\frac{2n}{n-1}$ and $\alpha\geq 0$. Moreover, let there be an $L^q$-lower bound. Then, $\lambda_q^\alpha>0$.
If there is an $\epsilon >0$ such that the Sobolev embedding from above is continuous for all $q\in[q_{crit}, q_{crit}+\epsilon]$ and there is an $L^{[q_{crit}, q_{crit}+\epsilon]}$-lower bound, then $\limsup_{q\to q_{crit}} \lambda_q >0$.
\end{lem}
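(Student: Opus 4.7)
The strategy is to bound the Rayleigh quotient defining $\lambda_q^\alpha$ from below by chaining H\"older's inequality, the pointwise bound $\rho^{-\alpha}\geq 1$, the assumed Sobolev embedding, and the $L^q$-lower bound. By Remark~\ref{remim}~i) I may restrict the infimum to test spinors $\phi\in C_c^\infty(M,S)\cap\mathrm{im}_{L^q}(D)$, so that the $L^q$-lower bound is applicable. For such $\phi$, H\"older gives $(D\phi,\phi)\leq\Vert D\phi\Vert_q\Vert\phi\Vert_p$ with $p$ conjugate to $q$, and since $0<\rho\leq 1$ and $\alpha\geq 0$, we have $\rho^{-\alpha}\geq 1$ pointwise, hence both $\Vert D\phi\Vert_q\leq\Vert\rho^{-\alpha}D\phi\Vert_q$ and $\Vert\phi\Vert_p\leq\Vert\rho^{-\alpha}\phi\Vert_p$.

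Next I invoke the hypothesised continuous Sobolev embedding, $\Vert\rho^{-\alpha}\phi\Vert_p\leq C_S\Vert\phi\Vert_{H_1^q}$, together with the standard $L^q$-elliptic regularity estimate for $D$ on manifolds of bounded geometry, $\Vert\phi\Vert_{H_1^q}\leq C_{\mathrm{ell}}(\Vert\phi\Vert_q+\Vert D\phi\Vert_q)$, and the $L^q$-lower bound $\Vert\phi\Vert_q\leq c_q\Vert D\phi\Vert_q$. Combining these,
\[\Vert\rho^{-\alpha}\phi\Vert_p\leq C_S C_{\mathrm{ell}}(1+c_q)\Vert D\phi\Vert_q\leq C_S C_{\mathrm{ell}}(1+c_q)\Vert\rho^{-\alpha}D\phi\Vert_q.\]
Substituting back into the H\"older estimate yields $(D\phi,\phi)\leq K_q\Vert\rho^{-\alpha}D\phi\Vert_q^2$ with $K_q:=C_S C_{\mathrm{ell}}(1+c_q)$, whence $\lambda_q^\alpha\geq 1/K_q>0$, which settles the first claim.

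For the second assertion I run exactly the same argument with $\alpha=0$ at every $q\in[q_{crit},q_{crit}+\epsilon]$. If the constants $C_S=C_S(q)$, $C_{\mathrm{ell}}=C_{\mathrm{ell}}(q)$ and $c_q$ remain uniformly bounded on this compact interval, then $K_q\leq K<\infty$ there and hence $\limsup_{q\to q_{crit}}\lambda_q\geq 1/K>0$. The main obstacle I expect is precisely this uniformity in $q$: the hypotheses yield each $c_q$ and $C_S(q)$ pointwise, but their behaviour as $q\to q_{crit}$ needs an additional input. I anticipate handling it either by a direct continuity argument (the $L^q$-norms of a fixed compactly supported spinor are continuous in $q$ and the bounded-geometry Sobolev constants depend continuously on $q$) or by a Banach--Steinhaus argument applied to the family of inverses $D^{-1}\colon\mathrm{im}_{L^q}D\to L^q$; this is where the real work in the second part lies.
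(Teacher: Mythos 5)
There is a genuine gap in the first part, and it comes from how you read the hypothesis. In the paper's convention (Appendix, weighted function spaces) the norm on $\rho^\alpha L^p$ is $\Vert\phi\,|\,\rho^\alpha L^p\Vert=\Vert\rho^\alpha\phi\Vert_p$, so the assumed continuous embedding $H_1^q\hookrightarrow\rho^\alpha L^p$ reads $\Vert\rho^\alpha\phi\Vert_p\leq C_{\alpha,q}\Vert\phi\Vert_{H_1^q}$: it controls the $L^p$-norm of $\phi$ \emph{damped} by the decaying weight, which is the weakest of the three quantities $\Vert\rho^\alpha\phi\Vert_p\leq\Vert\phi\Vert_p\leq\Vert\rho^{-\alpha}\phi\Vert_p$. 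You read it as $\Vert\rho^{-\alpha}\phi\Vert_p\leq C_S\Vert\phi\Vert_{H_1^q}$, which is strictly stronger than the hypothesis and is in fact false for the weights actually used ($\rho\sim e^{-r}$, so it would force exponential decay of every $H_1^q$-spinor). As a result, your chain $(D\phi,\phi)\leq\Vert D\phi\Vert_q\Vert\phi\Vert_p\leq\Vert\rho^{-\alpha}D\phi\Vert_q\,\Vert\rho^{-\alpha}\phi\Vert_p$ enlarges the $\phi$-factor in the wrong direction and then cannot be closed by the assumed embedding; nor does the hypothesis give you the unweighted embedding $\Vert\phi\Vert_p\leq C\Vert\phi\Vert_{H_1^q}$ that the intermediate step would need. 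The fix is to distribute the weight \emph{inside} H\"older, which is exactly what the paper does: $(D\phi,\phi)=\int_M\langle\rho^{-\alpha}D\phi,\rho^\alpha\phi\rangle\,\vo_g\leq\Vert\rho^{-\alpha}D\phi\Vert_q\,\Vert\rho^\alpha\phi\Vert_p$, after which the hypothesised embedding applies directly to the second factor and the remainder of your argument (the equivalence $\Vert\phi\Vert_{H_1^q}\sim\Vert\phi\Vert_q+\Vert D\phi\Vert_q$, the $L^q$-lower bound, and $\Vert D\phi\Vert_q\leq\Vert\rho^{-\alpha}D\phi\Vert_q$) goes through verbatim, together with the reduction to $\phi\in\mathrm{im}_{L^q}(D)$ via Remark \ref{remim}.i.

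For the second claim your diagnosis of where the difficulty lies is accurate: one needs the constants to be uniform for $q$ near $q_{crit}$. The paper itself only asserts that $C_{\alpha,q}$ depends continuously on $q$ and offers no further justification, so your proposed continuity discussion is at least as detailed as the published argument. The only real defect to repair is the misread embedding in part one.
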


\begin{proof} Let $\phi\in im_{L^q} D\cap \rho^{-\alpha}H_1^q$. Then,
\begin{align*} (D\phi,\phi)&\leq \Vert \rho^\alpha\phi\Vert_p\Vert \rho^{-\alpha}D\phi\Vert_q\\
&\leq  C_{\alpha,q}(\Vert \phi\Vert_q+\Vert D\phi\Vert_q)\Vert \rho^{-\alpha}D\phi\Vert_q\leq \tilde{C_{\alpha,q}} \Vert \rho^{-\alpha}D\phi\Vert_q^2\end{align*}
where the first inequality is the H\"older inequality, the third one is given by the Sobolev embedding and the last one is obtained from the $L^q$-lower bound and $|\rho|\leq 1$. With Remark \ref{remim}.i, we obtain $\lambda_q^\alpha>0$.\\
With the additional assumptions and since $C_{\alpha,q}$ depends continuously on $q$, we obtain analogously the second claim setting $\alpha=0$.
\end{proof}

\begin{rem}\label{posrem}
In particular, Lemma \ref{pos} gives a simple proof of the fact that on closed manifolds $\lm$ is always positive \cite[Lem. 4.3.1(1)]{Ammha}. With Theorem \ref{Skrzspin} we have the continuity of the above embedding on manifolds of bounded geometry. Thus, for manifolds of bounded geometry with an $L^{q_{crit}}$-lower bound $\lm$ is always positive.\\
For a general manifold, $\lm$ can be zero, e.g. if a complete spin manifold of finite volume has a positive element in the essential spectrum of its Dirac operator \cite[Lem. 3.3.iii]{NG2}.
\end{rem}

We come now to the weighted subcritical problem.

\begin{lem}\label{Dwei}Let the weighted Sobolev embedding $H_1^q\hookrightarrow \rho^{\alpha} L^p$ be compact for $\alpha >0$, $2\leq p<p_{crit}=\frac{2n}{n-1}$ and let $q$ be conjugate to $p$. Furthermore, we assume that there is an $L^q$-lower bound. 
Then there exists a spinor $\phi\in H_1^q$ with $D\phi=\lambda^\alpha_q\rho^{\alpha p} |\phi|^{p-2}\phi$ and $\Vert\rho^\alpha \phi \Vert_p=1$. Moreover, $\phi$ is smooth outside its zero set.
\end{lem}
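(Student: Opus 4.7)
The plan is to apply the direct method of the calculus of variations to the infimum defining $\lambda_q^\alpha$, using the $L^q$-lower bound for compactness/boundedness in $H_1^q$ and the compact weighted Sobolev embedding to pass to the limit.

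First, I would take a minimizing sequence $\phi_n\in C_c^\infty(M,S)$ with $(D\phi_n,\phi_n)>0$. Because the Rayleigh quotient is invariant under $\phi\mapsto t\phi$, normalize $(D\phi_n,\phi_n)=1$, so $\Vert\rho^{-\alpha}D\phi_n\Vert_q^2\to\lambda_q^\alpha$. By Remark \ref{remim}.i, I may assume $\phi_n\in\mathrm{im}_{L^q}(D)$ (projecting off the $L^q$-kernel preserves $D\phi_n$ and the ratio). Together with $\rho\leq 1$, the $L^q$-lower bound then yields
\[
\Vert\phi_n\Vert_q \leq c\Vert D\phi_n\Vert_q \leq c\Vert\rho^{-\alpha}D\phi_n\Vert_q,
\]
so $\phi_n$ is bounded in $H_1^q$.

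Next, extract a subsequence with $\phi_n\rightharpoonup\phi$ in $H_1^q$ (reflexive since $q>1$). Compactness of $H_1^q\hookrightarrow\rho^\alpha L^p$ upgrades this to strong convergence of $\rho^\alpha\phi_n$ to $\rho^\alpha\phi$ in $L^p$. At the same time $\rho^{-\alpha}D\phi_n$ is bounded in $L^q$; testing against $C_c^\infty$ spinors identifies its only possible weak $L^q$-limit as $\rho^{-\alpha}D\phi$. The weak$\,\times\,$strong duality of $L^q$ and $L^p$ then gives
\[
(D\phi_n,\phi_n)=\int_M\langle\rho^{-\alpha}D\phi_n,\rho^\alpha\phi_n\rangle\,\vo\;\longrightarrow\;(D\phi,\phi)=1,
\]
so in particular $\phi\neq 0$. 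Combined with the weak lower semicontinuity of $\Vert\rho^{-\alpha}\cdot\Vert_q$ this gives
\[
\frac{\Vert\rho^{-\alpha}D\phi\Vert_q^2}{(D\phi,\phi)}\leq\liminf_n\frac{\Vert\rho^{-\alpha}D\phi_n\Vert_q^2}{(D\phi_n,\phi_n)}=\lambda_q^\alpha,
\]
so $\phi$ realizes the infimum.

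A standard first variation then produces the Euler--Lagrange equation $D(\lambda_q^\alpha\phi-\rho^{-\alpha q}|D\phi|^{q-2}D\phi)=0$, as displayed in Remark \ref{remim}.ii. Setting $\psi:=\rho^{-\alpha q}|D\phi|^{q-2}D\phi$ and using $(q-1)p=q$ one checks $\Vert\rho^\alpha\psi\Vert_p^p=\Vert\rho^{-\alpha}D\phi\Vert_q^q$, so after a harmless rescaling $\psi$ satisfies $D\psi=\lambda_q^\alpha\rho^{\alpha p}|\psi|^{p-2}\psi$ with $\Vert\rho^\alpha\psi\Vert_p=1$. Renaming $\psi$ back to $\phi$ gives the spinor in the statement. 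Smoothness outside the zero set follows from elliptic regularity for $D$: where $\phi\neq 0$ the nonlinearity $\rho^{\alpha p}|\phi|^{p-2}\phi$ is as smooth as $\phi$ itself (since $p\geq 2$ and $\rho\in C^\infty$), and standard Sobolev/Schauder bootstrapping on a small ball around such a point upgrades $\phi$ to $C^\infty$ there.

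\textbf{Main obstacle.} The crucial step is showing $\phi\neq 0$, since without it the minimizing sequence could concentrate at infinity and be lost in the limit. This is exactly where the compactness of the weighted Sobolev embedding $H_1^q\hookrightarrow\rho^\alpha L^p$ is used decisively, by pairing the weak $L^q$-limit of $\rho^{-\alpha}D\phi_n$ against the strong $L^p$-limit of $\rho^\alpha\phi_n$; the remaining ingredients (lower semicontinuity, first variation, elliptic bootstrap) are then routine.
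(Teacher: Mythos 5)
Your proposal is correct and follows essentially the same route as the paper: normalize a minimizing sequence in $\mathrm{im}_{L^q}(D)$ via Remark \ref{remim}.i, use the $L^q$-lower bound for $H_1^q$-boundedness, pass to the limit by pairing the weak $L^q$-limit of $\rho^{-\alpha}D\phi_n$ with the strong $\rho^\alpha L^p$-limit of $\phi_n$, and dualize the Euler--Lagrange equation via Remark \ref{remim}.ii. The only cosmetic difference is your normalization $(D\phi_n,\phi_n)=1$ versus the paper's $\Vert\rho^{-\alpha}D\phi_n\Vert_q=1$, which is immaterial.
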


\begin{proof} From Lemma \ref{pos}, we obtain that $\lambda^\alpha_q >0$. Let $\phi_i$ be a normalized minimizing sequence for $\lambda^\alpha_q$, i.e. $\Vert \rho^{-\alpha} D\phi_i\Vert_q=1$ and $(D\phi_i,\phi_i)\to (\lambda^\alpha_q)^{-1}$. Due to Remark \ref{remim}.i we can assume that $\phi_i\in \rho^{-\alpha}H_1^q\cap im_{L^q} D\subset H_1^q$. Since $|\rho|\leq 1$, $\Vert D\phi_i\Vert_q\leq 1$. Due to the $L^q$-lower bound, $\phi_i$ is uniformly bounded in $H_1^q$.  Hence, $\phi_i\to\phi$ weakly in $H_1^q$ and, with the compact Sobolev embedding, even strongly in $\rho^{\alpha} L^p$. Moreover, since $D\phi_i$ is bounded in $\rho^{-\alpha}L^q$, $\rho^{-\alpha}D\phi_i$ converges weakly in $L^q$ to $\rho^{-\alpha}D\phi$. Then, $\Vert \rho^{-\alpha}D\phi\Vert_q\leq 1$ and 
\begin{align*} |(D\phi,\phi)-(D\phi_i,&\phi_i)|\leq |(\rho^{-\alpha}D\phi-\rho^{-\alpha}D\phi_i, \rho^\alpha\phi)|+|(\rho^{-\alpha}D\phi_i, \rho^\alpha\phi-\rho^\alpha\phi_i)|\\
&\leq \underbrace{|(\rho^{-\alpha}D\phi-\rho^{-\alpha}D\phi_i, \rho^\alpha\phi)|}_{\to 0\mathrm{\ since\ }  D\phi_i\xrightarrow{w-\rho^{-\alpha}L^q} D\phi}+\hspace{-0.3cm} \underbrace{\Vert \rho^\alpha\phi-\rho^\alpha\phi_i\Vert_p}_{\to 0\mathrm{\ since\ }\phi_i\xrightarrow{s-\rho^{\alpha}L^p}\phi}\hspace{-0.3cm} \underbrace{\Vert\rho^{-\alpha}D\phi_i\Vert_q.}_{=1}
\end{align*}
Hence, $(D\phi, \phi)=(\lambda^\alpha_q)^{-1}>0$ and we have \[\lambda^\alpha_q\leq \frac{\Vert \rho^{-\alpha}D\phi\Vert_q^2}{(D\phi,\phi)}\leq \lim_{i\to\infty} \frac{\Vert \rho^{-\alpha}D\phi_i\Vert_q^2}{(D\phi_i,\phi_i)}=\lambda^\alpha_q\]
which already implies that $\Vert \rho^{-\alpha}D\phi\Vert_q=1$. Thus, $\phi$ fulfills the Euler-Lagrange-equation and due to Remark \ref{remim}.ii we obtain a spinor $\psi\in H_1^q$ with $D\psi=\lambda^\alpha_q\rho^{\alpha p}|\psi|^{p-2}\psi$ with $\Vert\rho^\alpha\psi\Vert_p=1$. Smoothness outside the zero set follows from local elliptic regularity theory.
\end{proof}

Now, we establish the first step of the convergence , we fix $\alpha$ and let $p\to p_{crit}$.

\begin{lem}\label{Dconv}Let $\phi_{\alpha,p}\in H_1^q$ be solutions of $D\phi_{\alpha, p}=\lambda^\alpha_q\rho^{\alpha p}|\phi_{\alpha, p}|^{p-2}\phi_{\alpha,p}$ with $\Vert \rho^\alpha \phi_{\alpha,p}\Vert _p=1$, $\alpha >0$ is fixed, $p\in [2,p_{crit})$ and $q$ conjugate to $p$.
Furthermore, let $M$ have bounded geometry, $\lm(S^n) > \lambda^\alpha_{q_{crit}}(M)$ and let $D$ have an $L^q$-lower bound.\\
Then, $\phi_p:=\phi_{\alpha,p}\to \phi_\alpha$ in the $C^1$-topology on each compact subset as $p\to p_{crit}$ and \[D\phi_\alpha= \lambda_{q_{crit}}^\alpha \rho^{\alpha p_{crit}} |\phi_\alpha|^{p_{crit}-2}\phi_\alpha\]
and $\Vert \rho^\alpha \phi_\alpha\Vert_{p_{crit}}=1$.
\end{lem}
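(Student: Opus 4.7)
The plan is to extract a limit of $\phi_p$ by compactness, identify it as a solution of the critical problem, and then verify that the normalization is preserved in the limit. The three key ingredients are uniform $H_1^q$ bounds on $\phi_p$, local elliptic regularity (giving $C^1_{loc}$-convergence on compact sets), and a concentration-compactness argument that exploits the hypothesis $\lm(S^n)>\lambda^\alpha_{q_{crit}}(M)$.

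First I would observe that the Euler-Lagrange equation together with $\Vert \rho^\alpha\phi_p\Vert_p=1$ forces the identity $\Vert \rho^{-\alpha}D\phi_p\Vert_q=\lambda^\alpha_q$: from $|D\phi_p|=\lambda^\alpha_q\rho^{\alpha p}|\phi_p|^{p-1}$, raising to the $q$-th power and using $(p-1)q=p$ reduces the integral to $(\lambda^\alpha_q)^q$. Lemma \ref{Dine}.ii yields $\limsup_{p\to p_{crit}}\lambda^\alpha_q\leq \lambda^\alpha_{q_{crit}}$, so this quantity is uniformly bounded as $p\to p_{crit}$. Combined with $|\rho|\leq 1$ and the $L^q$-lower bound applied to $\phi_p$ (which by Remark \ref{remim}.i may be assumed to lie in $\mathrm{im}_{L^q}D$), one obtains a uniform bound on $\Vert \phi_p\Vert_{H_1^q}$.

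Next I would bootstrap on compact subsets. On any compact $K\subset M$, Sobolev embedding gives $\phi_p$ uniformly bounded in $L^p(K)$, hence the right-hand side of the PDE is uniformly in $L^q(K)$; iterating local interior elliptic estimates for $D$ on the uniformly controlled balls coming from bounded geometry upgrades $\phi_p$ to a uniform $C^{1,\beta}(K)$ bound. Arzel\`a--Ascoli and a diagonal extraction then yield $\phi_\alpha$ and a subsequence with $\phi_p\to\phi_\alpha$ in $C^1_{loc}$. Along a further subsequence $\lambda^\alpha_q\to \lambda\leq \lambda^\alpha_{q_{crit}}$, and passing to the limit in the PDE gives
\[
D\phi_\alpha = \lambda\,\rho^{\alpha p_{crit}}|\phi_\alpha|^{p_{crit}-2}\phi_\alpha,
\]
while Fatou yields $\Vert \rho^\alpha\phi_\alpha\Vert_{p_{crit}}\leq 1$. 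Once the normalization is shown to equal $1$, the computation of the first paragraph applied to $\phi_\alpha$ produces $\Vert \rho^{-\alpha}D\phi_\alpha\Vert_{q_{crit}}=\lambda$, forcing $\lambda=\lambda^\alpha_{q_{crit}}$ by minimality.

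The main obstacle is therefore the reverse inequality $\Vert \rho^\alpha\phi_\alpha\Vert_{p_{crit}}\geq 1$, i.e.\ ruling out loss of mass in the limit. Escape of mass to infinity is controlled by the decaying weight: writing $\rho^{\alpha p}|\phi_p|^p = \rho^{\delta p}\cdot\rho^{(\alpha-\delta)p}|\phi_p|^p$ for small $\delta>0$, one has $\sup_{M\setminus B_R}\rho^{\delta p}\to 0$ as $R\to\infty$, while the remaining integral is dominated by the $H_1^q$ bound through the weighted Sobolev embedding with exponent $\alpha-\delta$. Concentration of $|\rho^\alpha\phi_p|^p\vo$ at a point $x_0\in M$ is ruled out by the sphere hypothesis through a standard blow-up analysis: rescaling about $x_0$ by the concentration scale, using the conformal covariance of $D$ and the uniformity afforded by bounded geometry, produces a nontrivial bubble spinor on flat $\R^n$, which is conformally $S^n\setminus\{pt\}$. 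Such a profile would force the value of the functional to be at least $\lm(S^n)$, contradicting $\lambda^\alpha_{q_{crit}}(M)<\lm(S^n)$. Hence no mass is lost, $\Vert \rho^\alpha\phi_\alpha\Vert_{p_{crit}}=1$, and the claimed $C^1_{loc}$-convergence holds. This concentration-compactness step is the delicate part of the argument and is precisely where the sphere gap hypothesis enters.
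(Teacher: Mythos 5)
The decisive step of this lemma is the a priori bound $\sup_p\max_M|\phi_p|<\infty$, and your proposal assumes it rather than proves it. You claim that the uniform $H_1^q$ bound plus ``iterating local interior elliptic estimates'' yields a uniform $C^{1,\beta}(K)$ bound. For each fixed $p<p_{crit}$ the bootstrap does close (the Sobolev gain $q^*=nq/(n-q)>p$ is strict for $q>q_{crit}$), but the gain per step tends to zero as $p\to p_{crit}$ (at $q=q_{crit}$ one has exactly $q^*=p_{crit}$ and the iteration stalls), the number of required iterations diverges, and the constants are not uniform in $p$. This is the standard failure mode of subcritical approximations to a critical problem; if the bootstrap gave uniform bounds, the hypothesis $\lm(S^n)>\lambda^\alpha_{q_{crit}}(M)$ would never be needed to extract the $C^1_{loc}$ limit. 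The paper instead assumes $m_p:=\max|\phi_p|\to\infty$, rescales by $\delta_p=m_p^{2-p}$ around the maximum points $x_p$, compares the Dirac operator in normal coordinates with the Euclidean one (this is where bounded geometry enters quantitatively), and obtains a nontrivial limit spinor $\psi$ on $\R^n$ with $|\psi(0)|=1$ and $\Vert\psi\Vert_{p_{crit}}\le\rho^{-\alpha}(y)$; this forces either a nonzero $L^{p_{crit}}$-harmonic spinor on $\R^n$ (if the $x_p$ escape to infinity) or $\lm(S^n)=\lm(\R^n)\le\lambda^\alpha_{q_{crit}}$, both contradictions. Only after this a priori bound does the local Schauder/$L^p$ machinery give the $C^1_{loc}$ convergence.

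Your blow-up sketch does appear, but in the wrong place and in a form that does no work: you invoke it to exclude concentration of $|\rho^\alpha\phi_p|^p$ at a point, yet once you have (as you assert) a uniform $C^{1,\beta}$ bound on compacts together with $C^1_{loc}$ convergence, interior concentration is trivially impossible, so in your architecture the sphere hypothesis is never genuinely used. The blow-up analysis must be moved to the front as the proof of the uniform sup bound, and carried out with the bookkeeping of the weight (the factor $\rho^{-\alpha}(y)$ in the $L^{p_{crit}}$-bound of the bubble is what makes the comparison with $\lm(S^n)$ come out as $\le\lambda^\alpha_{q_{crit}}$ rather than $\le\lambda_{q_{crit}}$). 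The remaining ingredients of your proposal are sound: the identity $\Vert\rho^{-\alpha}D\phi_p\Vert_q=\lambda^\alpha_q$ and Lemma \ref{Dine}.ii for the eigenvalue identification, and the splitting $\rho^{\alpha p}=\rho^{\delta p}\rho^{(\alpha-\delta)p}$ with the continuous weighted embedding to prevent escape of mass to infinity --- the latter being a legitimate alternative to the paper's argument, which instead bounds $\int_{M\setminus B_R}|D\phi_p|^q$ by $Ce^{-R\alpha q}(\lambda^\alpha_q)^q$ and contradicts $\limsup_{q\to q_{crit}}\lambda_q>0$.
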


\begin{proof} Note first that the parameters $p$ and $q$ are always coupled. Thus, if $q\to q_{crit}$, $p\to p_{crit}$.\\
From Lemma \ref{max}, we know that each $|\phi_p|$ has a maximum. Firstly, we show by contradiction that $m_p:=\max |\phi_p|$ is uniformly bounded. Thus, we assume $m_p\to \infty$. Let $x_p\in M$ be a point where $|\phi_p|$ attains its maximum. Around each $x_p$, we introduce geodesic normal coordinates on a ball of radius $\epsilon$ that is smaller than the injectivity radius $\mathrm{inj}(M)$, compare \cite[Lem. 11]{NG3}. We define $\delta_p= m_p^{2-p}$ and rescale $\psi_p (x)=m_p^{-1} \phi_p(\delta_p x)$. The spinor $\psi_p$ is transported to a spinor on a ball on $\R^n$ via the exponential map. For simplicity we just denote the resulting spinor by $\psi_p$, too. The weight function in the new coordinates will be denoted by $\rho_p$. Then, in the resulting rescaled geodesic coordinates the Euler-Lagrange-equation reads
\[ D^{\R^n}\psi_p+\frac{1}{4}\sum_{ijk} \tilde{\Gamma}^k_{ij}\delta_i\cdot\delta_j\cdot\delta_k\cdot\psi_p +\sum_{ij} (b_i^j-\delta_i^j)\delta_i\cdot\nabla_{\delta_j}\psi_p=\lambda_q^{\alpha}\rho^{\alpha p}_p|\psi_p|^{p-2}\psi_p\]
 where $\psi_p$ is defined on a ball in $\R^n$ of radius $\frac{\epsilon}{\delta_p}$  and
\begin{align}\label{Dcoeff}
b_i^j(x)&=\delta_i^j-\frac{1}{6}\delta_p^2  R_{i\gamma\beta j}(\delta_p x)x^\gamma x^\beta +O(\delta_p^2|x|^2)\to\delta_i^j\nonumber\\[-0.3cm]
&\\[-0.3cm]
\tilde{\Gamma}_{ij}^k(x)&=\partial_i b_j^k -\frac{1}{3} \delta_p(R_{ik\gamma j}(\delta_p x)+R_{i\gamma kj}(\delta_p x))x^\gamma+O(\delta_p^2 |x|^2)\to 0\nonumber
\end{align}
as $\delta_p\to 0$ (i.e. as $p\to p_{crit}$).\\
Here we used the local comparison of a Dirac operator with the one of the standard Euclidean space \cite[Sect. 3 and 4]{Amm03b}.\\
Since the Riemannian curvature and its derivatives are bounded, we have $C^1$-convergence of \eqref{Dcoeff} on any compact subset of $\R^n$. Thus, for any compact subset $K$ and an open set $\Omega$ with $K\subset \Omega \subset \R^n$ we obtain with interior Schauder and $L^p$-estimates (see \cite[Sect. 3.1.5 and 3.2.2]{Ammha}) that $\psi_p\to \psi$ in $C^1$ on $K$.  Thus, we have a spinor $\psi$ on $\R^n$ with
$D^{\R^n}\psi= (\lim_{p\to p_{crit}} \rho^{\alpha p_{crit}}_p\lambda_q^{\alpha}) |\psi|^{p_{crit}-2}\psi$. Moreover, $|\psi|\leq 1$ with $|\psi(0)|=1$. As in the Yamabe problem, we prove that $\Vert \psi\Vert_{p_{crit}, g_E}\leq 1$: For $p< p_{crit} $ we have 
\begin{align*}
\int_{|x|<\epsilon \delta^{-1}_p} |\psi_p|^{p} b_p\,\vo_{g_E}&=\int_{B_\epsilon(x_p)} |\phi_p|^{p} \delta_p^{\frac{2p}{p-2}-n}\,\vo_g\\
&\leq   C(\epsilon)\Vert \phi_p\Vert^p_{H_1^q}\leq \tilde{C}(\epsilon) \Vert D\phi_p\Vert^p_q=\tilde{C}(\epsilon)(\lambda^\alpha_q)^p
\end{align*}
where $b_p\vo_{g_E}$ denotes the transported volume element with $b_p\to 1$ as $p\to p_{crit}$, we used the Sobolev embedding on $B_\epsilon (x_p)$ (its constant only depends on $\epsilon$ since $M$ is of bounded geometry) and we used the $L^q$-lower bound. Thus, with Fatous Lemma we obtain $\psi\in L^{p_{crit}}(g_E)$.

Consider firstly the case that $x_p$ espape to infity as $p\to p_{crit}$. Then $\rho_p\to 0$ and, hence, $D^{\R^n}\psi=0$. Thus, we have an $L^{p_{crit}}$-harmonic spinor on $\R^n$ with $|\psi(0)|=1$ which is a contradiction. Thus, we can stick to the case that $x_p\to y\in M$. Then, for $\epsilon_1<\epsilon$
\begin{align*}
\int_{|x|<\epsilon_1\delta^{-1}_p}|\psi_p|^{p} b_p\,\vo_{g_E}&\leq \max_{B_{\epsilon_1}(x_p)} \rho^{-\alpha p}\int_{B_{\epsilon_1}(x_p)} \rho^{\alpha p} |\phi_p|^{p} \delta_p^{\frac{2p}{p-2}-n}\,\vo_g\\
&\leq  \delta_p^{\frac{2p}{p-2}-n} \max_{B_{\epsilon_1}(x_p)} \rho^{-\alpha p}\leq \max_{B_{\epsilon_1}(x_p)} \rho^{-\alpha p}.
\end{align*}
With Fatous Lemma we obtain $\Vert \psi\Vert_{p_{crit}, g_E}\leq \max_{B_{\epsilon_1}(y)} \rho^{-\alpha }$. Since $\epsilon_1$ can be chosen arbitrarily small we have $\Vert \psi\Vert_{p_{crit}, g_E}\leq \rho^{-\alpha}(y)$. Thus, 
\begin{align*} \lm(S^n)&=\lm(\R^n)\leq \frac{\Vert D^{\R^n}\psi\Vert_{q_{crit}}^2}{(D^{\R^n}\psi,\psi)}\\ &=(\lambda_{q_{crit}}^\alpha \rho^{\alpha p_{crit}\left( \frac{2}{q_{crit}}-1\right)}(y) \Vert \psi\Vert_{p_{crit}}^{p_{crit}(\frac{2}{q_{crit}}-1)}\leq \lambda_{q_{crit}}^\alpha
 \end{align*}
which gives a contradiction. Thus, $|\phi_\alpha|$ is bounded from above. Using again interior Schauder and $L^p$-estimates, we can then show that $\phi_\alpha$ is uniformly bounded in $C^{1,\alpha}$ on each bounded subset and, hence, converges in $C^1$ on every compact subset $K\subset M$  to a spinor $\phi_\alpha$. Thus, we have $D\phi_\alpha\leq \lambda^\alpha_{q_{crit}} \rho^{\alpha p_{crit}}|\phi_\alpha|^{p_{crit}-2}\phi_\alpha$ and $\Vert \rho^{\alpha}\phi_\alpha\Vert_{p_{crit}}\leq 1$.\\
We still need to show that $\Vert \rho^{\alpha}\phi_\alpha\Vert_{p_{crit}}=1$:\\
Assume that $\phi_\alpha=0$. Fix $R>0$ and let $B_R$ be the ball of radius $R$ around the fixed $z\in M$ (cf. Definition of $\rho$ in Section \ref{pr}) we obtain  that 
\[ \int_{M\setminus B_R} |D\phi_p|^q\vo_g \leq \max_{M\setminus B_R} \rho^{\alpha q} \Vert \rho^{-\alpha}D\phi_p\Vert_q^q \leq C e^{-R\alpha q}(\lambda^\alpha_q)^q.\] With 
\[ \lambda_q \leq \frac{\Vert D\phi_p\Vert_q^2}{(D\phi_p,\phi_p)}= \left( \lambda^\alpha_q\right)^{-1} \Vert D\phi_p\Vert_q^2\leq \left( \lambda^\alpha_q\right)^{-1} \Vert \rho^{-\alpha} D\phi_p\Vert_q^2=\lambda^\alpha_q,\]
we then get \begin{align*}
 \int_{B_R} |D\phi_p|^q\vo_g &= \int_{M} |D\phi_p|^q\vo_g-\int_{M\setminus B_R} |D\phi_p|^q\vo_g \\
 &\geq (\lambda^\alpha_q\lambda_q)^{\frac{q}{2}}- Ce^{-R\alpha q}(\lambda^\alpha_q)^q\end{align*}
and ,thus,
\[ \limsup_{q\to q_{crit}} \int_{B_R} |D\phi_p|^q\vo_g\geq (\lambda^\alpha_{q_{crit}}\limsup_{q\to q_{crit}} \lambda_q)^{\frac{q_{crit}}{2}}- Ce^{-R\alpha q_{crit}}(\lambda^\alpha_{q_{crit}})^{q_{crit}}=:a\]
From Lemma \ref{pos} we know that $\limsup_{q\to q_{crit}} \lambda_q >0$. Thus, $R=R(\alpha)$ can be chosen big enough, that $a>0$. But with \[\int_{B_R} |D\phi_p|^q \vo_g\leq \left(\int_{B_R} |D\phi_p|^{q_{crit}} \vo_g\right)^{\frac{q}{q_{crit}}}\vol(B_R,g)^{1-\frac{q}{q_{crit}}}\]
 this contradicts the assumption that $\phi_p$ goes to zero on compact subsets. Thus, $\phi_\alpha\ne 0$ and we obtain with \[0<\lambda^\alpha_{q_{crit}}\leq \frac{\Vert D\phi_\alpha\Vert_{q_{crit}}^2}{(D\phi_\alpha,\phi_\alpha)}\leq \lambda^\alpha_{q_{crit}} \Vert \rho^\alpha \phi_\alpha\Vert_{p_{crit}}^{2-q_{crit}}\] 
in the usual way that $\Vert \rho^\alpha \phi_\alpha\Vert_{p_{crit}}=1$ and $D\phi_\alpha= \lambda_{q_{crit}}^\alpha \rho^{\alpha p_{crit}} |\phi_\alpha|^{p_{crit}-2}\phi_\alpha$.
\end{proof}

Lastly, we get rid of the weight as $\alpha\to 0$.

\begin{lem}\label{Dinfmass}
Assume there is a sequence of smooth and positive spinors $\phi_{\alpha}\in H_1^{q_{crit}}$ fulfilling $D\phi_{\alpha}=\lambda^\alpha_{q_{crit}}\rho^{\alpha p_{crit}}|\phi_{\alpha}|^{p_{crit}-2}\phi_{\alpha}$ with $\Vert \rho^\alpha \phi_{\alpha}\Vert_{p_{crit}}=1$ and $\alpha>0$. Then, $\phi_\alpha\to \phi$ as $\alpha\to 0$ in $C^1$ on each compact subset and $D\phi\leq \lm |\phi|^{p_{crit}-2}\phi$.\\
If additionally $\ov{\lm(M)}>\lm(M)>0 $, then $\Vert \phi\Vert_{p_{crit}}=1$.
\end{lem}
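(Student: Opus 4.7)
The plan is to follow the three-step structure of Lemma \ref{Dconv}: derive uniform local a priori bounds, extract a $C^1_{loc}$-convergent subsequence, and then obtain the normalization from the infinity invariant. First, from the Euler--Lagrange equation together with $\rho\leq 1$ and $(p_{crit}-1)q_{crit}=p_{crit}$,
\[\Vert D\phi_\alpha\Vert_{q_{crit}}^{q_{crit}}=(\lambda^\alpha_{q_{crit}})^{q_{crit}}\int\rho^{\alpha p_{crit}q_{crit}}|\phi_\alpha|^{p_{crit}}\,\vo_g\leq(\lambda^\alpha_{q_{crit}})^{q_{crit}}\leq\lm(M)^{q_{crit}},\]
and the $L^{q_{crit}}$-lower bound (applied after splitting off the $L^{q_{crit}}$-kernel via Remark \ref{remim}.i) upgrades this to a uniform $H_1^{q_{crit}}$-bound on $\phi_\alpha$.

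To rule out local $L^\infty$-blowup I would use the rescaling from Lemma \ref{Dconv}. If $m_\alpha:=|\phi_\alpha(x_\alpha)|\to\infty$ with $x_\alpha$ in a compact set, pass to a subsequence with $x_\alpha\to y\in M$, set $\delta_\alpha=m_\alpha^{2-p_{crit}}$, and let $\psi_\alpha(x)=m_\alpha^{-1}\phi_\alpha(\delta_\alpha x)$ in geodesic coordinates centered at $x_\alpha$. The identity $(n-1)p_{crit}=2n$ makes the rescaling $L^{p_{crit}}$-invariant; bounded geometry gives $C^1_{loc}$-convergence of the Dirac coefficients to those of $D^{\R^n}$; and $\rho^{\alpha p_{crit}}(\delta_\alpha x)\to 1$ since $\alpha\to 0$ and $\rho$ is bounded below near $y$. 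Hence a $C^1_{loc}$-limit $\psi$ on $\R^n$ exists, solving $D^{\R^n}\psi=\lm(M)|\psi|^{p_{crit}-2}\psi$ with $|\psi(0)|=1$, and a Fatou estimate using $\max_{B_\epsilon(x_\alpha)}\rho^{-\alpha p_{crit}}\to 1$ yields $\Vert\psi\Vert_{p_{crit},g_E}\leq 1$. The standard computation gives
\[\lm(S^n)=\lm(\R^n)\leq\lm(M)\Vert\psi\Vert_{p_{crit}}^{2/(n-1)}\leq\lm(M),\]
forcing $\lm(M)=\lm(S^n)$; combined with $\overline{\lm(M)}\leq\lm(S^n)$ from Theorem \ref{subset}, this contradicts the standing hypothesis $\overline{\lm(M)}>\lm(M)$ of Theorem \ref{main1}. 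Therefore $|\phi_\alpha|$ is uniformly bounded on compact sets; interior Schauder and $L^p$-estimates then give uniform $C^{1,\beta}_{loc}$-bounds, and a subsequence converges in $C^1_{loc}$ to a spinor $\phi$. Using $\lambda^\alpha_{q_{crit}}\to\lm(M)$ (Lemma \ref{Dine}) and $\rho^{\alpha p_{crit}}\to 1$ pointwise, I pass to the limit to obtain $D\phi=\lm(M)|\phi|^{p_{crit}-2}\phi$ and, by Fatou, $\Vert\phi\Vert_{p_{crit}}\leq 1$.

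For the normalization $\Vert\phi\Vert_{p_{crit}}=1$, assume $\Vert\phi\Vert_{p_{crit}}<1$ and derive a contradiction. Fix $\chi_R\in C^\infty(M)$ with $\chi_R\equiv 0$ on $B_{R-1}$, $\chi_R\equiv 1$ on $M\setminus B_R$, $|\nabla\chi_R|\leq 2$, and test $\lm(M\setminus B_{R-1})$ against $\chi_R\phi_\alpha$. Since Clifford multiplication by $\nabla\chi_R$ is skew-Hermitian, the cross term contributes $0$ to the real part of $(D(\chi_R\phi_\alpha),\chi_R\phi_\alpha)$, so the denominator equals $\lambda^\alpha_{q_{crit}}\int\chi_R^2\rho^{\alpha p_{crit}}|\phi_\alpha|^{p_{crit}}\vo_g\geq\lambda^\alpha_{q_{crit}}(1-I_R+o_\alpha(1))$, where $I_R:=\int_{B_R}|\phi|^{p_{crit}}$. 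For the numerator, $\Vert\nabla\chi_R\cdot\phi_\alpha\Vert_{q_{crit}}=o_\alpha(1)+o_R(1)$ (by $C^0$ convergence on compacta and $\phi\in L^{q_{crit}}(M)$), while $\Vert\chi_R D\phi_\alpha\Vert_{q_{crit}}^{q_{crit}}\leq(\lambda^\alpha_{q_{crit}})^{q_{crit}}(1-I_{R-1}+o_\alpha(1))$ using again $\rho\leq 1$ and $(p_{crit}-1)q_{crit}=p_{crit}$. Combining,
\[\lm(M\setminus B_{R-1})\leq\lambda^\alpha_{q_{crit}}\,\frac{(1-I_{R-1}+o_\alpha(1))^{2/q_{crit}}}{1-I_R+o_\alpha(1)}+o_R(1).\]
Passing $\alpha\to 0$ and then $R\to\infty$, and using $2/q_{crit}-1=1/n$,
\[\overline{\lm(M)}\leq\lm(M)\,\bigl(1-\Vert\phi\Vert_{p_{crit}}^{p_{crit}}\bigr)^{1/n}<\lm(M),\]
contradicting $\overline{\lm(M)}>\lm(M)$.

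The main obstacle is the sharp cutoff estimate in the third paragraph: the numerator must be controlled by the factor $(1-I_{R-1})^{2/q_{crit}}$ rather than the trivial $1$, so that when $\Vert\phi\Vert_{p_{crit}}<1$ the limiting ratio is \emph{strictly} less than $\lm(M)$. This uses both $2/q_{crit}-1=1/n>0$ and the Clifford skew-symmetry that cancels the cross term in the denominator; without either, the escape-of-mass argument would only produce $\overline{\lm(M)}\leq\lm(M)$ with equality possible.
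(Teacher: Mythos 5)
Your proposal is correct, and the convergence part (uniform $H_1^{q_{crit}}$-bound, blow-up exclusion by rescaling, interior Schauder/$L^p$-estimates) matches the paper's, which simply refers back to Lemma \ref{Dconv}; your observation that blow-up would force $\lm(M)=\lm(S^n)\geq\ov{\lm(M)}$, contradicting the standing hypothesis, is the right adaptation. Where you genuinely diverge is the normalization step. The paper only rules out \emph{total} loss of mass: assuming $\phi\equiv 0$, it tests $\lm(M\setminus B_R)$ with $\eta_R\phi_\alpha$, uses that the cutoff error lives on a compact set where $\phi_\alpha\to 0$, and gets $\lm(M)\geq\lm(M\setminus B_R)$; it then upgrades $\phi\neq 0$ to $\Vert\phi\Vert_{p_{crit}}=1$ via the scaling identity $\lm\leq\Vert D\phi\Vert_{q_{crit}}^2/(\phi,D\phi)=\lm\Vert\phi\Vert_{p_{crit}}^{2/(n-1)}\leq\lm$ applied to the limit equation. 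You instead rule out \emph{any partial} loss directly, by tracking $I_R=\int_{B_R}|\phi|^{p_{crit}}$ in both numerator and denominator of the cutoff test and arriving at the quantitative bound $\ov{\lm(M)}\leq\lm(M)(1-\Vert\phi\Vert_{p_{crit}}^{p_{crit}})^{1/n}$; note that when $\Vert\phi\Vert_{p_{crit}}=0$ your final strict inequality degenerates to $\ov{\lm(M)}\leq\lm(M)$, which still contradicts the hypothesis, so nothing is lost. Your route buys a sharper dichotomy (it quantifies exactly how much escaping mass would depress $\ov{\lm}$) and avoids invoking the scaling identity on the limit spinor; the paper's route needs only the crude cutoff estimate but then relies on the limit spinor satisfying the Euler--Lagrange equation exactly. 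Both require the same two structural facts you isolate: skew-symmetry of Clifford multiplication killing the cross term in $(D(\chi\phi),\chi\phi)$, and $2/q_{crit}-1=1/n>0$. One point to keep at the paper's level of care: in the blow-up step the maximum points must be (near-)maxima of $|\phi_\alpha|$ over fixed-size balls, not merely over the compact set $K$, so one should take the global maxima furnished by Lemma \ref{max} and treat the case where they escape to infinity separately, as Lemma \ref{Dconv} does.
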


\begin{proof} The first claim is proven as in Lemma \ref{Dconv} and we obtain that $\phi_{\alpha} \to \phi$ as $\alpha\to 0$ in $C^1$ on each compact subset with $D\phi\leq \lm|\phi|^{p-2}\phi$.\\
Let now $\ov{\lm(M)}>\lm(M)$. We need to show that $\Vert \phi\Vert_{p_{crit}}=1$.\\
Assume that $\phi_{\alpha}$ converges to $0$: \\
We introduce a smooth cut-off function $\eta_R\leq 1$ with support in $M\setminus B_R(z)$ for the fixed $z\in M$, $\eta_R\equiv 1$ on $M\setminus B_{2R}$. Then,
\begin{align*}
&\left|\int_{M\setminus B_R}\!\!\ |D (\eta_R \phi_{\alpha})|^{q_{crit}}\vo_g-\int_{M\setminus B_R}\!\! |D \phi_{\alpha}|^{q_{crit}}\vo_g\right|\to 0  \quad \mathrm{for}\ \alpha\to 0
\end{align*}
since $\phi_{\alpha}\to 0$ in $C^1$ on each compact set. Hence, with $(\phi_{\alpha}, D\phi_{\alpha})=\lambda^\alpha_{q_{crit}}$  we get 
\begin{align*}
\lm(M)^2 &= \lim_{\alpha\to 0 }\lambda_{q_{crit}}^{\alpha}(M)^2= \lim_{\alpha\to 0} \Vert\rho^{-\alpha} D \phi_\alpha\Vert_{q_{crit}}^2\\
& \geq \lim_{\alpha\to 0} \Vert D \phi_\alpha\Vert_{q_{crit}}^2=\lim_{\alpha\to 0} \Vert D (\eta_R \phi_\alpha)\Vert_{q_{crit}}^2\\
&\geq \lm (M\setminus  B_R) \lim_{\alpha\to 0}  \int_{M\setminus B_R}\!\!\!\langle \eta_R \phi_\alpha, D (\eta_R \phi_\alpha)\rangle \vo_g\\
&= \lm (M\setminus B_R) \lim_{\alpha\to 0}  \left( (\phi_\alpha, D\phi_\alpha)- \int_{B_{2R}}\!\!\!(1-\eta_R^2)\langle \phi_\alpha, D\phi_\alpha\rangle \vo_g\right)\\
&=\lm (M\setminus B_R) \lim_{\alpha\to 0} \lambda_{q_{crit}}^{\alpha}\left( 1- \int_{B_{2R}}\!\!\!(1-\eta_R^2) \rho^{\alpha p_{crit}} |\phi_\alpha|^{p_{crit}}\vo_g\right)\\
&=\lm (M\setminus B_R) \lm(M)\end{align*} 
where the first and last equality use Lemma \ref{Dine}.i. Since $\lm >0$, this implies $\lm(M)\geq \lm(M\setminus B_R)$ which gives the contradiction.\\

Thus, $\Vert \phi\Vert_{p_{crit}}>0$. With $D\phi\leq \lm |\phi|^{p_{crit}-2}\phi$ and $\Vert\phi\Vert_{p_{crit}}\leq 1$, we obtain
\[ \lm\leq \frac{\Vert D\phi\Vert_{p_{crit}}^2}{(\phi,D\phi)}\leq \lm\Vert \phi\Vert_{p_{crit}}^{\frac{2}{n-1}}\leq \lm.\]
The equality implies $\Vert\phi\Vert_{p_{crit}}=1$ and $D\phi=\lm |\phi|^{p_{crit}-2}\phi$.
\end{proof}

Combining the Lemmas above, we obtain:

\begin{proof}[Proof of Theorem \ref{main1}]
Firstly, we note that from $\ov{\lm(M)}>\lm(M)$ we obtain $\lm(S^n)>\lm(M)$. Thus, with Lemma \ref{Dine}.i there is an $\alpha_0 >0$ such that $\lm(S^n)>\lambda_{q_{crit}}^\alpha(M)$ for all $\alpha\in(0,\alpha_0)$. Moreover, Lemma \ref{pos} states that $\lm(M)>0$. Hence, we can combine Lemma \ref{Dwei}, \ref{Dconv} and \ref{Dinfmass} to obtain Theorem \ref{main1}. The local $C^{1,\alpha}$-regularity of the solution follows as in the compact case \cite[Sect. 4.4]{Ammha} by the standard backtracking argument and $L^p$- and Schauder estimates (cf. Remark \ref{est}.iii).
\end{proof}

As in the case of the Yamabe invariant \cite[Thm. 13]{NG3}, in order to drop the condition on the $\lm$-invariant at infinity we can consider homogeneous manifolds with positive scalar curvature:

\begin{thm}\label{main2} Let $(M,g,\sigma)$ be a Riemannian spin manifold. Furthermore, we assume that its Dirac operator has an $L^{[q_{crit},q_{crit}+\epsilon]}$-lower bound for an $\epsilon>0$, $\s\geq c>0$  and that there exists a relatively compact set $U\subset\subset M$ such that for all $x\in M$ there is an isometry $f:M\to M$ with $f(x)\in U$. Then there is a spinor $\phi\in H_1^{q_{crit}}\cap L^{p_{crit}}$ that is smooth outside its zero set and fulfills $D\phi=\lm |\phi|^{p_{crit}-2}\phi$ and $\Vert \phi\Vert_{p_{crit}}=1$. Moreover, $\phi$ is locally in $C^{1,\alpha}$.
\end{thm}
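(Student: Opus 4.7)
The plan is to rerun the three-step scheme of Theorem~\ref{main1} (Lemmas~\ref{Dwei}, \ref{Dconv}, \ref{Dinfmass}) and to replace the role of $\ov{\lm(M)}>\lm(M)$ in the last step by a translation argument that exploits the transitive isometry action. Homogeneity together with $U\subset\subset M$ forces bounded geometry of~$M$; Lemma~\ref{pos} combined with the $L^{[q_{crit},q_{crit}+\epsilon]}$-lower bound gives $\lm(M)>0$; and $\s\geq c>0$ together with non-compactness of $M$ should yield $\lm(M)<\lm(S^n)$, so that by Lemma~\ref{Dine}.i there exists $\alpha_0>0$ with $\lambda^\alpha_{q_{crit}}(M)<\lm(S^n)$ for $\alpha\in(0,\alpha_0)$. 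Lemmas~\ref{Dwei} and~\ref{Dconv} then produce, for every such $\alpha$, a spinor $\phi_\alpha$ with $D\phi_\alpha=\lambda^\alpha_{q_{crit}}\rho^{\alpha p_{crit}}|\phi_\alpha|^{p_{crit}-2}\phi_\alpha$, $\Vert\rho^\alpha\phi_\alpha\Vert_{p_{crit}}=1$, and $\sup|\phi_\alpha|$ uniformly bounded above.

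A uniform positive lower bound $m_\alpha:=\sup|\phi_\alpha|\geq c_0>0$ follows from elementary interpolation: $\Vert\phi_\alpha\Vert_{p_{crit}}\geq\Vert\rho^\alpha\phi_\alpha\Vert_{p_{crit}}=1$ (since $\rho\leq 1$), while $\Vert D\phi_\alpha\Vert_{q_{crit}}\leq\Vert\rho^{-\alpha}D\phi_\alpha\Vert_{q_{crit}}=\lambda^\alpha_{q_{crit}}$ is uniformly bounded, so the $L^{q_{crit}}$-lower bound yields $\Vert\phi_\alpha\Vert_{q_{crit}}\leq C$, and then
\bqw 1\leq\Vert\phi_\alpha\Vert_{p_{crit}}^{p_{crit}}\leq m_\alpha^{p_{crit}-q_{crit}}\Vert\phi_\alpha\Vert_{q_{crit}}^{q_{crit}}\leq C^{q_{crit}}\,m_\alpha^{p_{crit}-q_{crit}}.\eqw

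With this in hand, translate. For each $\alpha$ pick $x_\alpha$ with $|\phi_\alpha(x_\alpha)|\geq m_\alpha/2$ and, by homogeneity, an isometry $f_\alpha$ with $y_\alpha:=f_\alpha(x_\alpha)\in U$; let $\tilde\phi_\alpha$ be the pullback of $\phi_\alpha$ under $f_\alpha^{-1}$ via the spin lift. Since $D$ is isometry-equivariant,
\bqw D\tilde\phi_\alpha=\lambda^\alpha_{q_{crit}}\tilde\rho_\alpha^{\alpha p_{crit}}|\tilde\phi_\alpha|^{p_{crit}-2}\tilde\phi_\alpha,\qquad\tilde\rho_\alpha:=\rho\circ f_\alpha^{-1},\eqw
and $|\tilde\phi_\alpha(y_\alpha)|\geq c_0/2$ with $y_\alpha\in\ov U$. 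Uniform interior Schauder and $L^p$-estimates (from bounded geometry) produce a $C^1_{loc}$-convergent subsequence $\tilde\phi_\alpha\to\tilde\phi$; compactness of $\ov U$ forces $\tilde\phi\not\equiv 0$. Since $\rho\sim e^{-r}$ and $f_\alpha^{-1}$ is an isometry, the oscillation of $\alpha p_{crit}\log\tilde\rho_\alpha$ on any fixed compact set is $O(\alpha)$, so along a further subsequence $\tilde\rho_\alpha^{\alpha p_{crit}}\to w$ locally uniformly for some constant $w\in[0,1]$.

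The main obstacle is the dichotomy $w>0$ versus $w=0$. When $w>0$, setting $c_1:=w$, Fatou gives $c_1\Vert\tilde\phi\Vert_{p_{crit}}^{p_{crit}}\leq 1$, and the rescaling $\hat\phi:=c_1^{1/(p_{crit}-2)}\tilde\phi$ satisfies $D\hat\phi=\lm(M)|\hat\phi|^{p_{crit}-2}\hat\phi$; the Rayleigh-quotient argument of Lemma~\ref{Dinfmass} then forces $\Vert\hat\phi\Vert_{p_{crit}}=1$. The alternative $w=0$ would produce a nontrivial bounded solution of $D\tilde\phi=0$, which is where $\s\geq c>0$ is used: the Lichnerowicz formula applied to $\eta_R\tilde\phi$ for a cut-off $\eta_R$ of a large ball yields $\int(\s/4)|\eta_R\tilde\phi|^2\leq\int|\d\eta_R|^2|\tilde\phi|^2$, and homogeneity (ruling out that $|\tilde\phi|$ decays along the orbit through $y_\infty$) together with $\s\geq c>0$ is expected to force a contradiction. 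Local $C^{1,\alpha}$-regularity follows, as in Theorem~\ref{main1}, from the standard bootstrap of Remark~\ref{est}.iii.
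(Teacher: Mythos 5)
Your overall scheme (run Lemmas~\ref{Dwei} and~\ref{Dconv}, then replace the role of $\ov{\lm(M)}>\lm(M)$ by translating mass into $U$) is the right one, and your interpolation bound $1\le m_\alpha^{p_{crit}-q_{crit}}\Vert\phi_\alpha\Vert_{q_{crit}}^{q_{crit}}$ is a clean way to bound $\sup|\phi_\alpha|$ from below. But two of your steps are genuine gaps, not routine details. First, to run the blow-up argument of Lemma~\ref{Dconv} (and again to get the sup-bound uniform in $\alpha$) you need $\lambda^\alpha_{q_{crit}}(M)<\lm(S^n)$, and you derive this from the assertion that $\s\ge c>0$ together with non-compactness ``should yield'' $\lm(M)<\lm(S^n)$. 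No such implication is established anywhere (Theorem~\ref{subset} only gives $\le$), the theorem does not even assume $M$ non-compact, and this is not how the hypothesis $\s\ge c>0$ enters the argument. Second, the case $w=0$ of your dichotomy is not closed: for a merely bounded harmonic spinor the cut-off Lichnerowicz inequality $\int(\s/4)|\eta_R\tilde\phi|^2\le\int|\d\eta_R|^2|\tilde\phi|^2$ has an uncontrolled right-hand side, and ``homogeneity rules out decay of $|\tilde\phi|$'' is not an argument about the fixed limit spinor $\tilde\phi$, which is in no way translation-invariant. (This case is in fact fixable from estimates you already have: Fatou applied to $\Vert\tilde\phi_\alpha\Vert_{q_{crit}}\le C$ gives $\tilde\phi\in L^{q_{crit}}\cap L^\infty\subset L^2$, and an $L^2$-harmonic spinor on a complete manifold with $\s\ge c>0$ vanishes by the standard Lichnerowicz argument, contradicting $|\tilde\phi(y_\infty)|\ge c_0/2$ --- but you did not say this.)

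The paper's proof uses $\s\ge c>0$ in a completely different and more local way, which is the idea your proposal is missing: it couples $\alpha=\alpha(p)\to0$ with $p\to p_{crit}$, translates each subcritical solution so that $\max|\phi_p|=|\phi_p(x_p)|$ with $x_p\in U$, and applies the Schr\"odinger--Lichnerowicz formula at the maximum point, where $\Delta|\phi_p|^2(x_p)\ge0$, to obtain the pointwise bound
\[ |\phi_p(x_p)|^{2(p-2)}\ \ge\ \tfrac{c}{4}\,(\lambda^{\alpha}_q)^{-2}, \]
the weight being discarded via $\rho\le1$ before any limit is taken. This gives non-vanishing of the $C^1_{loc}$-limit at a point of $\ov{U}$ in one stroke, so neither your unproved inequality $\lm(M)<\lm(S^n)$ as a consequence of positive scalar curvature nor your $w=0$ alternative appears. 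You should replace your two shaky steps by this maximum-point argument, or at minimum supply the $L^2$-vanishing argument for $w=0$ and an actual proof of $\lambda^\alpha_{q_{crit}}<\lm(S^n)$.
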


\begin{proof}
Due to the isometries, $M$ has bounded geometry. Thus, the Sobolev embeddings $H_1^q\to \rho^\alpha L^p$ are compact for $q\in (q_{crit},2]$, $q$ conjugate to $p$ and $\alpha>0$. Hence, we can apply Lemma \ref{Dwei} and obtain solutions $\phi_{\alpha,p}\in H_1^q$ of $ D\phi_{\alpha,p}=\lambda^\alpha_q\rho^{\alpha p}|\phi_{\alpha,p}|^{p-2}\phi_p$ and $\Vert \rho^\alpha \phi_{\alpha,p}\Vert_p=1$. \\
We choose $\alpha(p)$ such that $\lambda_q^{\alpha(p)}-\lambda_q\to 0$ and $\alpha(p)\to 0$ as $q\to q_{crit}$. Due to Lemma \ref{Dine}.i this is always possible. In the following, we abbreviate $\alpha=\alpha(p)$.\\
As in the proof of Lemma \ref{Dconv} we obtain that a subsequence of $\phi_p:= \phi_{\alpha(p), p}$ converges to $\phi$ in $C^1$-topology.\\
Lemma \ref{max} shows that every $|\phi_p|$ has a maximum. Due to the isometries we can assume that $\max |\phi_p|=|\phi_p (x_p)|$ where $x_p\in U$.
Then, we have \[D^2\phi_p=(\lambda^\alpha_q)^2\rho^{2\alpha p}|\phi_{p}|^{2(p-2)}\phi_p+\lambda^\alpha_q\,\d (\rho^{\alpha p}|\phi_{\alpha,p}|^{p-2})\cdot \phi_p.\] Taking the real part of the scalar product with $\phi_p$ we obtain using $\langle df\cdot \phi, \phi\rangle \in \mathrm{i}\R$:
\[ (\lambda^{\alpha}_q)^2\rho^{2{\alpha} p}|\phi_{p}|^{2(p-1)}=\Re \langle D^2\phi_p, \phi_p\rangle =\Re \langle \phi_p, \Delta \phi_p\rangle +\frac{\s}{4}|\phi_p|^2.\]
With $-\frac{1}{2}\Delta |\phi_p|^2+\langle \phi_p, \Delta \phi_p\rangle =|\Delta \phi_p|^2$ and $\Delta |\phi_p(x_p)|^2\geq 0$ we get
\[ (\lambda^{\alpha}_q)^2\rho^{2{\alpha} p}(x_p)|\phi_{p}(x_p)|^{2(p-1)} \geq \frac{\s(x_p)}{4}|\phi_p(x_p)|^2\geq \frac{c}{4}|\phi_p(x_p)|^2.\]
Since $\rho\leq 1$,
\[ |\phi_{p}(x_p)|^{2(p-2)} \geq \frac{c}{4}(\lambda^{\alpha}_q)^{-2}.\]
Thus, with $\lim_{p\to p_{crit}} \lambda_q^{\alpha}=\lim_{p\to p_{crit}} \lambda_q\geq \lm$ and $ \lm >0$, cf. Lemma \ref{pos}, we get
\bqw |\phi(x)|^{2(p_{crit}-2)} \geq \frac{c}{4} (\lm)^{-2}\eqw  
where $x\in \ov{U}$ is the limit of a convergent subsequence of $x_p$. Hence, $\Vert \phi\Vert_{p_{crit}}>0$ and, thus, as in Lemma \ref{Dinfmass} we have $D\phi=\lm|\phi|^{p_{crit}-2}\phi$ with $\Vert \phi\Vert_{p_{crit}}=1.$ Local $C^{1,\alpha}$-regularity follows as in Theorem \ref{main1}.
\end{proof}

\begin{ex} Let $M=S^n\times \R$ for $n\geq 2$ be equipped with the standard product structure and its unique spin structure. Then, $M$ is homogenous and has positive scalar curvature. In order to apply Theorem \ref{main2} one has to show that $M$ has an $L^{[q_{crit}, q_{crit}+\epsilon]}$-lower bound for a small $\epsilon>0$. Assume that $M$ has an $L^q$-harmonic spinor $\phi$ for $q\in [q_{crit}, q_{crit}+\epsilon]$ then due to the Sobolev embedding gives that $\phi$ is already an $L^2$-harmonic spinor which cannot happen since $M$ has positive scalar curvature. Thus, it remains to consider the essential $L^q$-spectrum. It can be checked that the Dirac operator in this example is $L^q$ invertible for all q and, thus, a minimizing solution is obtained.     
\end{ex}

In the remark below we want to examine open manifold which are spin conformally compactifiable by codimension greater or equal two. We want to study how the solutions on the manifold and its compactification are related.
 
\begin{rem}
Let $(M,g,\sigma)$ be an open connected Riemannian spin manifold that is spin conformally compactifiable to a closed Riemannian spin manifold $(N,h,\xi)$, i.e. there a conformal embedding $f: M\to N$ that is compatible with the spin structures. Assume that $S:=N\setminus f(M)$ is a compact submanifold of $N$ of codimension $\geq 2$. Then, $\lm (N)=\lm (M)=:\lambda$ (see \cite[Lem. 2.1]{NG} for codimension $>2$ and \cite[Lem. 3.1.1]{NGDiss}). On $N$ the Euler-Lagrange equation has at least one solution $\phi\in H_1^{q_{crit}}$ of
\[ D\phi= \lambda |\phi|^{p_{crit}-2}\phi\quad \Vert \phi\Vert_{p_{crit}}=1\]
that is $C^{1,\alpha}$ and smooth outside its zero set. Since the Euler-Langrange equation including the $L^{p_{crit}}$-condition is conformally invariant, each solution $\phi$ on $N$ gives a solution $f^*(\phi_{|_{f(M)}})$ on $M$.

On the other hand, let $\psi$ be a solution on $N$. Then, $\phi=f_*(\psi)$ is a solution on $f(M)$ just by conformal invariance. Due to \cite[Thm. 3.1]{Ammtau} we can remove the singularity in $S$ and $\phi$ fulfills the Euler-Lagrange equation on $M$. This showed that we have a one-to-one of solutions on $M$ and such on $N$. 
\end{rem}

\section{Conformal Hijazi inequality} \label{cohij}

On closed manifolds, the Hijazi inequality gives an estimate for eigenvalues of the Dirac operator in terms of the lowest eigenvalue of the conformal Laplacian $L=4\frac{n-1}{n-2}\Delta +\s$:
\begin{thm} \cite{Hija}
 Let $(M^n,g,\sigma)$ be a closed spin manifold of dimension $n$. Let further $\lambda$ be an eigenvalue of the Dirac operator and $\mu$ the lowest eigenvalue of $L$. Then
\[ \lambda^2\geq \frac{n}{4(n-1)} \mu.\]
\end{thm}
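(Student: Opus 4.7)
The plan is to combine the Schr\"odinger--Lichnerowicz--Friedrich identity on a conformally changed metric with the conformal covariance of the Dirac operator used throughout this paper. Since $M$ is closed and $L$ is self-adjoint elliptic, its lowest eigenvalue $\mu$ is realised by a smooth positive eigenfunction $u>0$ (via the strict maximum principle / Krein--Rutman). Set $f=u^{2/(n-2)}$ and $\ov g=f^2g$; the standard conformal transformation law $\s_{\ov g}=u^{-(n+2)/(n-2)}Lu$ then yields $\s_{\ov g}=\mu f^{-2}$. Transport the eigenspinor via $\ov\psi=f^{-(n-1)/2}\psi$ (the identification already recalled at the beginning of the paper), so that $D_{\ov g}\ov\psi=\lambda f^{-(n+1)/2}\psi$, $|\ov\psi|^2=f^{-(n-1)}|\psi|^2$ and $\vo_{\ov g}=f^n\vo_g$.

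The core identity is Friedrich's inequality on $(M,\ov g)$, which I would derive by integrating the Schr\"odinger--Lichnerowicz formula $D_{\ov g}^2=\nabla_{\ov g}^*\nabla_{\ov g}+\tfrac{1}{4}\s_{\ov g}$ against $\ov\phi$ and using the pointwise Penrose bound $|\ov\nabla\ov\phi|^2\geq\tfrac{1}{n}|D_{\ov g}\ov\phi|^2$, itself obtained from $|\ov P\ov\phi|^2=|\ov\nabla\ov\phi|^2-\tfrac{1}{n}|D_{\ov g}\ov\phi|^2\geq 0$ for the twistor operator $\ov P\ov\phi=\ov\nabla\ov\phi+\tfrac{1}{n}e_i\otimes e_i\cdot D_{\ov g}\ov\phi$. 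This yields, for every smooth spinor $\ov\phi$,
\[\frac{n-1}{n}\int_M|D_{\ov g}\ov\phi|^2\vo_{\ov g}\geq\frac{1}{4}\int_M\s_{\ov g}|\ov\phi|^2\vo_{\ov g}.\]

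Substituting $\ov\phi=\ov\psi$, both the $|D_{\ov g}\ov\psi|^2\vo_{\ov g}$ and $\s_{\ov g}|\ov\psi|^2\vo_{\ov g}$ integrands collapse, thanks to the explicit powers of $f$ recorded above, to positive multiples of the same weighted integral $\int_M f^{-1}|\psi|^2\vo_g$: the left-hand side becomes $\tfrac{n-1}{n}\lambda^2\int_M f^{-1}|\psi|^2\vo_g$ and the right-hand side becomes $\tfrac{\mu}{4}\int_M f^{-1}|\psi|^2\vo_g$. Cancelling this common positive factor gives exactly $\lambda^2\geq\tfrac{n}{4(n-1)}\mu$. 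The only items requiring care are the smoothness and positivity of the first eigenfunction $u$ (classical for self-adjoint elliptic operators) and the precise conformal scaling of $D$, $|\cdot|$, $\vo$ and $\s$ under $\ov g=f^2g$ (also classical, and already used throughout the paper), so the main obstacle is really just bookkeeping rather than any substantive analytic difficulty.
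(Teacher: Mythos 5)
Your proof is correct: the conformal rescaling by the first eigenfunction of $L$, the transformation laws for $D$, $|\cdot|$, $\s$ and $\vo$, and the integrated Schr\"odinger--Lichnerowicz identity combined with the twistor (Penrose) estimate all check out, and the two sides do collapse to the common factor $\int_M f^{-1}|\psi|^2\vo_g>0$. The paper does not prove this statement but only cites Hijazi, and your argument is exactly the classical conformal proof from that reference (implicitly for $n\geq 3$, as required for $L$ to be defined), so nothing further is needed.
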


The conformal Laplacian is highly related to the Yamabe invariant $Q$: For closed manifolds with $Q\geq 0$ we have
\[ Q(M,g)=\inf_{\ov{g}\in [g]} \mu(\ov{g})\vol(M,\ov{g})^\frac{2}{n},\]
where $[g]$ denotes the conformal class of $g$. An analogous result holds for $\lm$ on closed manifolds \cite[Prop. 2.6]{Amm03a}:
\[ \lm(M,g, \sigma)=\inf_{\ov{g}\in [g]} \lambda_1^+(\ov{g})\vol(M,\ov{g})^\frac{1}{n},\]
where $\lambda_1^+(\ov{g})$ is the first positive eigenvalue of the Dirac operator w.r.t. the metric $\ov{g}$. Thus, there is a conformal version of the Hijazi inequality on closed manifolds:
\[ \lm(M,g, \sigma)^2\geq \frac{n}{4(n-1)} Q(M,g).\]

Note that only the left side of the inequality depends on the spin structure.\\

In \cite{NG2}, we examined whether the Hijazi inequality on complete open manifolds holds when one replaces the eigenvalues by elements of the corresponding spectra. There we saw that for general open manifolds there is no hope for an Hijazi inequality, the standard hyperbolic space gives a counterexample \cite[Rem. 4.2.1.]{NGDiss} since $\mu(\Hy^n)= \frac{(n-1)^2}{4}>0$ but $\lambda_1^+(\Hy^n) = 0$. That is why we restricted in \cite{NG2} to complete manifolds of finite volume in order to obtain an Hijazi inequality on those manifolds. 

But the conformal Hijazi inequality is e.g. also valid for the hyperbolic space since $Q(\Hy^n)=Q(S^n)$ and $\lm(\Hy^n)=\lm(S^n)$ since $\Hy^n$ is conformally equivalent to a subset of the standard sphere. Thus, there is hope for the conformal Hijazi inequality to hold more generally.\\

Below we will show that if there is a solution of the Euler-Lagrange equation, the conformal version of the Hijazi inequality holds: 

\begin{thm}\label{Hij-lam}
Let $(M,g, \sigma)$ be a complete $n$-dimensional Riemannian spin manifold  with $n\geq 3$. Moreover, assume that there exists a spinor $\phi\in H_1^q(M,S)\cap L^p(M,S)$, smooth outside its zero set, with $p=\frac{2n}{n-1}$ and $q$ conjugate to $p$ such that it is a weak solution of $D\phi=\lambda |\phi|^{p-2}\phi$ and $\Vert\phi\Vert_p=1$. Then, 
\[ \lambda^2\geq \frac{n}{4(n-1)}Q.\]
In particular, if $\phi\in ker_{L^p}D\cap H_1^q$, then $Q\leq 0$.
\end{thm}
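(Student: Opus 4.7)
My plan is to use the conformal covariance of $D$ together with the Schrödinger--Lichnerowicz formula to estimate $\lambda^2$ from below by the Yamabe functional of $(M,g)$ applied to a test function built out of $\phi$. Set $u:=|\phi|^{(n-2)/(n-1)}$, so that $u$ is smooth and positive on $M':=\{\phi\ne 0\}$ and $\Vert u\Vert_{2n/(n-2)}^{2n/(n-2)}=\Vert\phi\Vert_p^p=1$. Consider the conformal metric $\ov g:=u^{4/(n-2)}g=f^2 g$ on $M'$, where $f:=|\phi|^{2/(n-1)}$. By the identification of spinor bundles for conformally related metrics (cf.\ Section~1), the rescaled spinor $\ov\phi:=f^{-(n-1)/2}\phi=|\phi|^{-1}\phi$ has constant pointwise length $|\ov\phi|_{\ov g}\equiv 1$, and the Euler--Lagrange equation combined with the conformal transformation of the Dirac operator gives
\[D_{\ov g}\ov\phi=f^{-(n+1)/2}D_g\phi=\lambda\,\ov\phi,\]
so that $\ov\phi$ is an eigenspinor of $D_{\ov g}$ with eigenvalue $\lambda$ and $\vol(M',\ov g)=\int_{M'}|\phi|^p\vo_g=1$.

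Applying the Schrödinger--Lichnerowicz identity $D_{\ov g}^2=\ov\nabla^*\ov\nabla+\tfrac{\ov\s}{4}$ to $\ov\phi$ and integrating over $M'$ then yields
\[\lambda^2=\int_{M'}|D_{\ov g}\ov\phi|^2\vo_{\ov g}=\int_{M'}|\ov\nabla\ov\phi|^2\vo_{\ov g}+\int_{M'}\tfrac{\ov\s}{4}|\ov\phi|^2\vo_{\ov g}.\]
Two identifications complete the argument. First, the standard conformal transformation of the scalar curvature, together with $|\ov\phi|_{\ov g}\equiv 1$ and $\vo_{\ov g}=u^{2n/(n-2)}\vo_g$, rewrites the scalar-curvature term as $\tfrac14\int_{M'}uL_g u\,\vo_g=\tfrac14\int_{M'}\bigl(4\tfrac{n-1}{n-2}|\nabla u|^2+\s u^2\bigr)\vo_g$, with $L_g$ the conformal Laplacian (integration by parts to be justified below). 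Second, the elementary pointwise inequality $|D\psi|^2\leq n|\nabla\psi|^2$ (Cauchy--Schwarz applied to $D=\sum_i e_i\cdot\nabla_{e_i}$) integrates to $\int_{M'}|\ov\nabla\ov\phi|^2\vo_{\ov g}\geq \lambda^2/n$. Rearranging:
\[\tfrac{n-1}{n}\lambda^2\geq \tfrac14\int_{M'}\!\!\left(4\tfrac{n-1}{n-2}|\nabla u|^2+\s u^2\right)\vo_g\geq \tfrac{Q}{4}\Vert u\Vert_{2n/(n-2)}^2=\tfrac{Q}{4},\]
where the last step applies the variational characterization of $Q(M,g)$ to (compactly supported approximations of) $u$. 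This is precisely $\lambda^2\geq \tfrac{n}{4(n-1)}Q$; if in addition $\phi\in\ker_{L^p}D$, then $\lambda=0$ and the inequality forces $Q\leq 0$.

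The main technical obstacles will be: (a) justifying the integration by parts in the Schrödinger--Lichnerowicz identity and in $\int uL_g u\,\vo_g$ on the non-compact $M'$, which should follow from a cut-off approximation exploiting $\phi\in H_1^q\cap L^p$ and the critical scaling of all integrands; (b) handling the singularity of $\ov g$ along $\{\phi=0\}$, by first carrying out the argument on the exhausting open sets $\{|\phi|>\varepsilon\}\cap B_R$ and passing to the limit, using the $C^{1,\alpha}$-regularity of $\phi$ and unique continuation for $D$ to see that the zero set is negligible; and (c) verifying that compactly supported approximations of $u$ are admissible for $Q$ with vanishing approximation error, which, via the Kato-type estimate $|\nabla u|\leq C|\phi|^{-1/(n-1)}|\nabla|\phi||\leq C|\phi|^{-1/(n-1)}|\nabla\phi|$, reduces to the finiteness of $\int|\phi|^{-2/(n-1)}|\nabla\phi|^2\vo_g$---a quantity that is built into the rescaled SL identity itself and is therefore automatic.
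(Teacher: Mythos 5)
Your argument is correct in substance and arrives at the same inequality, but it is packaged differently from the paper's proof, so a comparison is worthwhile. You pass to the conformal gauge $\ov g=|\phi|^{4/(n-1)}g$ on $M'=\{\phi\ne 0\}$, in which $\ov\phi$ becomes a genuine eigenspinor of constant length one and unit total volume, apply the Schr\"odinger--Lichnerowicz formula together with the elementary Cauchy--Schwarz bound $|D\psi|^2\le n|\nabla\psi|^2$ (i.e.\ the Friedrich argument) in that gauge, and then translate $\int\ov{\s}\,\vo_{\ov g}$ back into $\int u L_g u\,\vo_g$ via the conformal covariance of the conformal Laplacian --- essentially Hijazi's original conformal argument adapted to Ammann's test spinor. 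The paper instead never changes the metric: it plugs $u=|\phi|^{(n-2)/(n-1)}$ directly into the Yamabe functional, applies Schr\"odinger--Lichnerowicz to $\phi$ itself in the metric $g$, and closes the estimate with the \emph{refined} Kato inequality $|\d|\phi||^2\le\frac{n-1}{n}|\nabla^{f}\phi|^2$ for the Friedrich-type connection with $f=\lambda|\phi|^{p-2}$, quoting \cite{CGH}. The two computations are equivalent: in your gauge $\d|\ov\phi|=0$, so the refined Kato inequality is hidden in the nonnegativity of $|\ov\nabla^{\lambda}\ov\phi|^2$, whereas in the paper's gauge the derivative of the norm survives and must be absorbed by the refined Kato inequality. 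What your route buys is that no nontrivial Kato-type input is needed; what it costs is that $(M',\ov g)$ is a degenerate, in general incomplete metric (it collapses along the zero set of $\phi$, and since $(n-2)/(n-1)<1$ the function $u$ is only H\"older there), so the three technical points (a)--(c) you flag must genuinely be carried out by exhaustion both at infinity and near the zero set. The paper faces the same singular weight $|\phi|^{-2/(n-1)}$ near the zeros but works throughout in the complete metric $g$, where the cut-off argument is more standard; it is, to be fair, equally terse about these justifications. Your dispatch of the harmonic case ($\lambda=0$ forces $Q\le 0$) matches the paper's.
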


\begin{proof}
We define $\alpha=\frac{n-2}{n-1}$. We note that due to the unique continuation property of the Dirac operator the zero set of $\phi$ has zero $n$-dimensional volume.
\begin{align*}
\frac{Q}{4}&\left( \int_M |\phi|^{\alpha \frac{2n}{n-2}}\vo_g\right)^\frac{n-2}{2} -\frac{n-1}{n}\lambda^2 \int_M |\phi|^p\vo_g \\
&\leq \frac{1}{4}\int_M |\phi|^\alpha L |\phi|^\alpha\vo_g -\frac{n-1}{n}\lambda^2 \int_M |\phi|^{2(p-2)}|\phi|^{2\alpha}\vo_g\\
&\leq\int_M |\phi|^{2\alpha-2} \left( \frac{n}{n-1}|\d |\phi||^2+\frac{1}{2}\d^*\d |\phi|^2\right.\\
&\quad\quad\left.+\left(\frac{\s}{4}-\frac{n-1}{n}\lambda^2|\phi|^{2(p-2)}\right)|\phi|^2\right)\vo_g
\end{align*}

Using $\frac{1}{2}\d^*\d|\phi|^2=\langle D^2\phi,\phi\rangle -\frac{\s}{4}|\phi|^2-|\nabla\phi|^2$ and $|\nabla\phi|^2=|\nabla^f\phi|^2+2\frac{f}{n}\langle (D-f)\phi,\phi\rangle+\frac{f^2}{n}|\phi|^2$ with $f=\lambda|\phi|^{2(p-2)}$, we obtain
\begin{align*}
\frac{Q}{4} -\frac{n-1}{n}\lambda^2&\leq \int_M |\phi|^{2\alpha-2} \left(\frac{n}{n-1}|\d|\phi||^2-|\nabla^f\phi|^2+\langle (D^2-f^2)\phi,\phi\rangle \right)\vo_g\\
&=\int_M |\phi|^{2\alpha-2}\left(\frac{n}{n-1}|\d|\phi||^2-|\nabla^{\lambda |\phi|^{p-2}}\phi|^2\right)\vo_g\leq 0\\
\end{align*}

where we used the refined Kato inequality \cite[(3.9)]{CGH}.

These estimates imply that $\int_M |\phi|^\alpha L |\phi|^\alpha\vo_g < \infty$.
\end{proof}

Theorem \ref{Hijsum} is now just a combination of the Theorems \ref{main1}, \ref{main2} and \ref{Hij-lam}.


\begin{appendix}
\section{Sobolev embeddings}\label{app}

Let $E$ be a hermitian vector bundle on a Riemannian manifold $(M,g)$ with connection $\nabla$. Denote by $\Gamma_c (E)$ the compactly supported smooth sections of $E$. Define for $u\in \Gamma_c (E)$
\bq\label{normE}\Vert u\ |{H_s^p(E)}\Vert^p=\sum_{j=0}^s \int_M |\underbrace{\nabla\cdots\nabla}_{j\ times} u|^p\vo_g\eq
and $H_s^p(E)$ as the completion of $\Gamma_c (E)$ with respect to this norm.\\

On closed manifolds $M^n$, the Sobolev spaces do not depend on the metric on $M$ and the connection on $E$. Sobolev embeddings on hermitian vector bundles can then be examined by the following procedure: One uses the known Sobolev embeddings for complex-valued functions on $\R^n$ and generalize them to $\C^r$-valued functions on $\R^n$. This generalization follows immediately since $\Vert f=(f_1,\ldots, f_r)^T\ | H_s^p(\R^n, \C^r)\Vert \sim \sum_{i=0}^r \Vert f_i\ | H_s^p(\R^n)\Vert$, cf. Lemma \ref{trback}. Then, one uses a finite covering of $M$ with a corresponding trivialization of $E$ to get a Sobolev embedding for $E$.\\

On open manifolds, in general there is a dependence on both the metric and the connection. We will concentrate on bundles of bounded geometry, i.e. the manifold itself is of bounded geometry and the curvature of $E$ and all its derivatives are bounded as well.\\

In \cite{Skrz}, continuity and compactness for weighted Sobolev embeddings were studied for Sobolev spaces of real valued functions. We will need the same result for Sobolev spaces of bundles, especially the compactness of the embedding $H_1^q(M,S) \hookrightarrow \rho L^p(M,S)$ on spinors for $q$ and $p$ conjugate, $\rho$ a radial weight (see \ref{weight}) and $2\leq p< p_{crit}=\frac{2n}{n-1}$. In this Appendix, we want to use the result from Skrzypczak \cite{Skrz} to obtain the following theorem (or more generally a bundle version of Theorem \ref{mainSkrz}):

\begin{thm}\label{Skrzspin}
Let $(M,g)$ be an $n$-dimensional manifold with an hermitian vector bundle $E$ of bounded geometry (e.g. if $M$ is a spin manifold, let $E$ be the corresponding spinor bundle). Let $\rho$ be a radial weight with $\rho\to 0$ as $|x|\to\infty$. Then the Sobolev embedding $H_1^q(E)\hookrightarrow L^p(E)$ is continuous and $H_1^q(E)\hookrightarrow \rho L^p(E)$ is compact for all $2\leq p < p_{crit}=\frac{2n}{n-1}$ and $\frac{1}{n}$ and  $\frac{1}{n}> \frac{1}{q}-\frac{1}{p}$.
\end{thm}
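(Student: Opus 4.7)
The plan is to reduce Theorem \ref{Skrzspin} to the scalar weighted Sobolev embedding (Theorem \ref{mainSkrz}) by exploiting the bounded geometry of $E$ to set up a uniform trivialization. Fix a radius $r$ strictly less than $\mathrm{inj}(M,g)$ and use bounded geometry of $(M,g)$ to choose a countable family of points $(x_i)_{i\in\N}$ such that the geodesic balls $B_i:=B_r(x_i)$ cover $M$ with uniformly bounded multiplicity; on this cover one has a subordinate partition of unity $(\chi_i)$ whose derivatives up to the required order are bounded independently of $i$. On each $B_i$ I trivialize $E|_{B_i}$ by parallel transport of an orthonormal frame at $x_i$ along radial geodesics.

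The key technical point is a uniform comparison of norms. Since both $(M,g)$ and the connection $\nabla^E$ have bounded geometry, in these synchronous frames the metric coefficients, Christoffel symbols and connection one-forms, together with their derivatives up to the order needed, are bounded by constants independent of $i$. Consequently there exist constants independent of $i$ such that for every section $u$ supported in $B_i$,
\[ \Vert u\Vert_{H_1^q(E|_{B_i})}\sim \sum_{a=1}^{r}\Vert u^a\Vert_{H_1^q(\R^n,\C)}, \]
with the analogous equivalences for the unweighted and weighted $L^p$-norms; this is the bundle analogue of Lemma \ref{trback}, and it is the place where bounded geometry of $E$ enters in an essential way.

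With this comparison in hand, continuity of $H_1^q(E)\hookrightarrow L^p(E)$ follows by applying the scalar Sobolev embedding componentwise on each $B_i$, and summing the resulting local estimates via $(\chi_i)$ and the uniform multiplicity of the cover. For the compact embedding $H_1^q(E)\hookrightarrow \rho L^p(E)$, take a bounded sequence $(u_k)\subset H_1^q(E)$. On each $B_i$, the scalar Rellich-Kondrachov theorem, applicable componentwise with constants depending only on the geometric bounds (the strict subcritical hypothesis $\tfrac{1}{n}>\tfrac{1}{q}-\tfrac{1}{p}$ being the point), yields a subsequence converging in $L^p(E|_{B_i})$; a standard diagonal extraction produces a subsequence converging in $L^p_{\mathrm{loc}}(E)$ to some $u$. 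The decay of $\rho$ then controls the tail: given $\epsilon>0$, choose $R$ so that $\rho(x)<\epsilon$ whenever $d(x,z)>R$, so that the contribution to $\Vert u_{k_l}-u\Vert_{\rho L^p}$ from $M\setminus B_R(z)$ is dominated by $\epsilon$ times the (uniformly bounded) unweighted $L^p$-norm of $u_{k_l}-u$, while on the relatively compact set $B_R(z)$, covered by finitely many $B_i$, convergence is already established.

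The principal difficulty is the uniformity of every constant appearing in the argument (bundle-to-scalar norm equivalence, local Rellich-Kondrachov constants, and derivatives of the partition of unity); uniformity is precisely what the bounded geometry hypothesis on $E$ is designed to secure, and once it is available the remainder of the argument is a routine componentwise application of Skrzypczak's scalar weighted embedding.
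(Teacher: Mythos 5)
Your argument is correct, but it follows a genuinely different route from the paper's. The paper does not prove Theorem \ref{Skrzspin} directly: it rederives Skrzypczak's scalar result (Theorem \ref{mainSkrz}) for bundles by rerunning the wavelet-frame proof of \cite{Skrz} --- the synchronous trivializations (Lemma \ref{syntr}, Remark \ref{triv-norm}) and the componentwise norm equivalence (Lemma \ref{trback}) show that the wavelet-coefficient characterization of the weighted Besov/Sobolev norms only acquires an extra \emph{finite} sum over the fibre index, so the homeomorphism onto weighted sequence spaces, and with it the continuity and compactness criteria, carry over verbatim. You instead give a direct argument: a uniformly locally finite geodesic cover with uniform synchronous trivializations yields the continuous unweighted embedding (this is essentially the paper's own Corollary \ref{cont_emb}), and compactness of the weighted embedding follows from local Rellich--Kondrachov plus diagonal extraction plus the tail estimate $\Vert \rho(u_k-u)\Vert_{L^p(M\setminus B_R)}\leq \sup_{M\setminus B_R}\rho\cdot\Vert u_k-u\Vert_{L^p(M)}$, exploiting $\rho\to 0$. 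Your approach is more elementary and fully suffices for the statement as it is actually used in the paper (weight on the target, $q$ conjugate to $p$, so $q\leq 2\leq p$ and the $\ell^q\hookrightarrow\ell^p$ summation over the cover goes through); what the paper's heavier route buys is the full bundle version of Theorem \ref{mainSkrz} --- sharp if-and-only-if conditions, the whole Besov scale, and general radial weights --- which is strictly more than Theorem \ref{Skrzspin} asserts. The only step worth spelling out in your write-up is the Fatou argument showing that the local limit $u$ lies in $L^p(E)$ with norm bounded by the uniform $H_1^q$-bound, so that the unweighted $L^p$-norm of $u_k-u$ appearing in your tail estimate is indeed uniformly bounded; the ingredients for this are already present in what you wrote.
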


If one is just interested in continuous embedding, it is much easier to carry over the local results to manifolds of bounded geometry by chosing a suitable covering as it is indicated in Corollary \ref{cont_emb}.

\subsection{Weighted function spaces}\label{weight}

\begin{defi}\ \!\cite[Def. 2+4]{Skrz}
A function $w: M^n\to (0.\infty)$ is called an {\em admissible weight} if\\
i)\ \ $w$ is infinitely differentiable and\\
ii) the quantities 
\[ C_w:=\sup_{x\in M}\sup_{y\in \ov{B_1(x)}} \frac{w(y)}{w(x)}, \quad 
c_{w,k}:=\sup_{x\in M} \frac{|\nabla^k w(x)|}{w(x)}
\]
are finite for all $k\in \N_0$.\\
If additionally there is a positive function $\kappa: [0,\infty)\to (0,\infty)$ and a point $x_0\in M$ such that $w(x)\sim \kappa ( dist(x_0,x))$, the weight is called {\it radial w.r.t. $x_0$}.
\end{defi}

\begin{rem}\label{exwe}\hfill\\
\textbf{i)}\ The equivalence relation $a\sim b$ means that there exists a constant $c$  that is independent on the context dependent relevant parameters such that $c^{-1}a\leq b\leq ca$.\\ 
\textbf{ii)} An example of a radial weight is given by $w(x)=e^{\alpha \tilde{r}}$ with $\alpha\in \R$ and $\tilde{r}\sim r=dist(x,x_0)$ is a smooth substitute for the distance function \cite[Lem. 2.1]{Shu}.
\end{rem}

The weighted spaces $L^p(E,w)=wL^p(E)$ are simply defined as completion of $\Gamma_c(E)$ by the quasi-norm $\Vert \phi\ |L^p(E,w)\Vert:= \Vert w\phi\ |L^p(E)\Vert$. Analog definitions give weighted $H_p^s$-spaces and weighted Besov spaces $B_{pq}^s$. For the definition of (unweighted) Besov spaces see \cite[Chap. 2]{TFS2} or Def. \ref{beso1} and Section \ref{manbgeo}.\\

For Sobolev embeddings of spaces of scalar-valued functions, there is the following result of Skrzypczak:

\begin{thm}\ \!\cite[Thm. 2 + Cor. 2]{Skrz}\label{mainSkrz}
Let $1\leq p_1,p_2,q_1,q_2\leq \infty $ and $s_1,s_2\in \R$. Let $w(x)=v(|x|)$ be a radial admissible weight on $M^n$.\\
\textbf{i)} Then, the embedding $B_{p_1,q_1}^{s_1}(M^n,w)\hookrightarrow B_{p_2,q_2}^{s_2}(M^n)$ holds if and only if
\bq \sum_{m=1}^\infty v(m)^{-p*}a(m)\textrm{\ if\ } p*<\infty \textrm{\ or\ }\sup_mv(m)^{-1}<\infty\textrm{\ if\ }p*=\infty \label{bed1}
\eq
and
\bq s_1-s_2-n\left( \frac{1}{p_1}-\frac{1}{p_2}\right)\left\{ \begin{matrix}
                                                        \geq \frac{n}{p*}\textrm{\ if\ } q*=\infty \\ >\frac{n}{p*}\textrm{\ if\ } q*<\infty
                                                       \end{matrix} \right.\label{bed2}
                                                       \eq
where $\frac{1}{p*}:=\left(\frac{1}{p_1}-\frac{1}{p_2}\right)_+$ and $\frac{1}{q*}:=\left(\frac{1}{q_1}-\frac{1}{q_2}\right)_+$ with $a_+=\left\{ \begin{matrix} 0\textrm{\ for\ }a\leq 0\\ a\textrm{\ for\ }a> 0\end{matrix} \right.$.\\
\textbf{ii)} The embeddings $B_{p_1,q_1}^{s_1}(M^n,w)\hookrightarrow B_{p_2,q_2}^{s_2}(M^n)$ and $H_{p_1}^{s_1}(M,w)\hookrightarrow H_{p_2}^{s_2}(M)$ are compact if and only if
\eqref{bed1}, \eqref{bed2},
\[  s_1-s_2-n\left( \frac{1}{p_1}-\frac{1}{p_2}\right)>\frac{n}{p*}\textrm{\ if\ }q*=\infty\]
and 
\[ \lim_{m\to\infty}v(m)=\infty \textrm{\ if\ }p*=\infty\]
hold.
\end{thm}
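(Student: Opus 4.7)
The plan is to reduce the embedding problem on $M^n$ to a corresponding weighted embedding of vector-valued sequence spaces indexed by a uniformly locally finite ball cover, where it becomes a purely combinatorial question about summability. Bounded geometry guarantees the existence of such a covering $\{B(x_j, r_0)\}_{j\in J}$ with $r_0$ smaller than the injectivity radius, a uniformly smooth subordinate partition of unity $\{\chi_j\}$, and normal charts $\exp_{x_j}$ in which the metric tensor and all its covariant derivatives are controlled uniformly in $j$. Grouping the centers by their integer distance $m_j:=\lfloor\mathrm{dist}(x_j,x_0)\rfloor$ from the reference point, the radiality together with admissibility of $w$ yields $w\sim v(m_j)$ uniformly on $B(x_j,r_0)$, and the quantity $a(m)$ appearing in \eqref{bed1} encodes the cardinality $\#\{j:m_j=m\}$, equivalently the volume of the spherical annulus at distance $m$.

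The first key step is the norm equivalence
\[
\Vert f\mid B_{p,q}^s(M,w)\Vert \;\sim\; \Bigl(\sum_{j\in J} v(m_j)^p\,\Vert (f\chi_j)\circ\exp_{x_j}\mid B_{p,q}^s(\R^n)\Vert^p\Bigr)^{1/p}
\]
(and the analogous formula for $H_p^s$), which transports the intrinsic manifold Besov norm to a localized weighted sequence norm of Euclidean pieces. This reduction relies on two ingredients: the invariance of Euclidean Besov spaces under uniformly smooth diffeomorphisms and under multiplication by uniformly smooth cutoffs, applied chart by chart; and an atomic or wavelet characterization of $B_{p,q}^s(M)$ valid on manifolds of bounded geometry, which allows the global Besov scale to be assembled from localized atoms. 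Once this identification is in place, both parts of the theorem reduce to sharp inclusions between weighted vector-valued sequence spaces.

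At the sequence level, the local Euclidean factors $\Vert (f\chi_j)\circ\exp_{x_j}\mid B_{p_1,q_1}^{s_1}(\R^n)\Vert$ control $\Vert\cdot\mid B_{p_2,q_2}^{s_2}(\R^n)\Vert$ exactly under the classical Sobolev--Besov inequality on a fixed ball, which is condition \eqref{bed2}; this is the \emph{local} content of the theorem. Condition \eqref{bed1} is the \emph{global} content: it is the precise criterion that a weighted $\ell^{p_1}$-sum with weights $v(m)$ and multiplicities $a(m)$ embeds continuously into the unweighted $\ell^{p_2}$-sum, as one verifies by a direct H\"older--duality computation over the annular index $m$. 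Necessity of both conditions is then proved by plugging in sharp atomic test functions: a single atom localized in one ball at distance $m$ forces \eqref{bed2}, while spreading atoms across the annulus at level $m$ with optimally chosen coefficients and letting $m\to\infty$ forces \eqref{bed1}.

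For compactness in part (ii) the sufficient direction adds a tail estimate: for every $\varepsilon>0$ one truncates to finitely many annuli (permissible because of $v(m)\to\infty$ when $p^*=\infty$, respectively because of summability in \eqref{bed1} when $p^*<\infty$), and on the remaining bounded region the classical Rellich--Kondrachov compactness for Euclidean Besov spaces completes the argument; conversely, failure of the extra conditions produces a bounded sequence of atoms escaping to infinity with no convergent subsequence. The main obstacle I expect is the norm-equivalence step: while for integer-order Sobolev spaces $H_p^k$ it is essentially immediate from \eqref{normE} combined with bounded geometry, for the full Besov scale one must carefully import the atomic or wavelet characterization of $B_{p,q}^s$ into the bounded-geometry setting, which is the real technical core of \cite{Skrz,TFS2} and requires a nontrivial amount of harmonic-analytic machinery.
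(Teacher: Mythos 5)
Your proposal takes essentially the same route as the paper: the paper quotes this result from Skrzypczak and sketches precisely this strategy --- localize over a bounded-geometry cover via a wavelet/atomic frame, identify $B^{s}_{p,q}(M,w)$ homeomorphically with weighted sequence spaces of coefficients, and decide continuity and compactness (conditions \eqref{bed1}, \eqref{bed2}) at the sequence level, with localized atoms at distance $m$ giving necessity and escaping atoms ruling out compactness. Your sketch is a correct expansion of that same reduction, with the technical core (the wavelet/atomic characterization on manifolds of bounded geometry) correctly identified as the part carried by \cite{Skrz}.
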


{\bf Strategy of proof.}
\begin{enumerate}
 \item Define a wavelet frame $\{\psi_i\}$ on $M$.
\item  Define weighted sequence spaces $b_{p,q}^s(M,\rho)$ such that the map 
\[ B_{p,q}^s(M,\rho) \to  b_{p,q}^s(M,\rho), \quad  f\mapsto \{\lambda_i=\langle f,\psi_i\rangle \}\] is an homeomorphism. Thus, it is sufficient to prove continuity and compactness on the level of the sequence spaces, where corresponding results are known.
\end{enumerate}

Theorem \ref{mainSkrz} also holds for a vector bundle $E$ of bounded geometry. With some few adaptions which will be discussed in the following the proof can be simply overtaken. \\
For that we firstly consider trivial $\C^r$-bundles over $\R^n$, see Section \ref{triv}, where we trace back the (quasi-)norms of function spaces of vector-valued functions to the ones of scalar-valued functions. Next, we give the definition of a wavelet frame on spaces of manifolds of bounded geometry, see Sect. \ref{manbgeo}. Then, we will see that the appearing weighted sequences spaces differ from the ones in the scalar-valued case just by a finite summation that does not affect the old proof.

\subsection{Function spaces on $\R^n$ with values in a trivial hermitian bundle}\label{triv}

We start with the case where $E$ is a trivial $\C^r$-bundle over the Euclidean space $\R^n$.\\
Let the Schwarz space of all complex-valued rapidly decreasing, infinitely differentiable functions on $\R^n$ be denoted by $\mathcal{S}(\R^n)$. If $\phi\in \mathcal{S}(\R^n,\C^r)$, i.e. $\phi=(\phi_1,\ldots ,\phi_r)^T$ with $\phi_i\in \mathcal{S}(\R^n)$, then we denote the Fourier transform by
\[ \hat{\phi}(\xi)=(F\phi)(\xi)=(2\pi)^{-\frac{n}{2}}\left( \int_{\R^n} e^{-i\xi x}\phi_i(x)\d x\right)^T\!\!, \quad \xi\in \R^n.\]
The inverse Fourier transform will be denoted by $F^{-1}\phi$ or $\check{\phi}$.\\
Moreover, let $\alpha_j$ be a dyadic resolution of unity defined by: $\alpha_0\in \mathcal{S}(\R^n)$ with $\alpha_0(x)=1$ if $|x|\leq1$ and $\alpha_0(x)=0$ if $|x|\geq \frac{3}{2}$. Let $\alpha_1(x)=\alpha_0(\frac{x}{2})-\alpha_0(x)$ and $\alpha_k(x)=\alpha_1(2^{-k+1}x)$ for $x\in\R^n$ and $k\in\N$. $\alpha_0$ is chosen such that $\sum_{j=0}^\infty \alpha_j(x)=1$ on $\R^n$.

\begin{defi}\ \!\cite[Sect. 2.3.1]{TriIT}\label{beso1}
Let $s\in \R$ and $0< q\leq \infty$.\\
Let $0<p\leq \infty$. The Besov space $B^s_{pq}(\R^n, \C^r)$ is the collection of all $f\in S'(\R^n)$ such that the norm
\[ \Vert \phi| B^s_{pq}(\R^n, \C^r)\Vert=\left(\sum_{j=0}^\infty 2^{jsq}\Vert (\alpha_j\hat{\phi})\check{}\ |\ L^p(\R^n,\C^r)\Vert^q\right)^\frac{1}{q} \]
is finite.\\
For $q=\infty$, we set $\Vert \phi \ |\ B^s_{p\infty}(\R^n, \C^r)\Vert=\sup_j 2^{sj}\Vert  (\alpha_j\hat{\phi})\check{}\ |\ L^p(\R^n,\C^r)\Vert$.
\end{defi}

The goal is to trace back everything to the (quasi-)norms of complex-valued functions:
\begin{lem}\hfill\label{trback} \\
i) Let $f=(f_1,\ldots, f_r)^T\in L^p(\R^n, \C^r,w)$. Then, 
\[\Vert f\ | L^p(\R^n, \C^r,w)\Vert \sim \sum_{i=1}^r \Vert f_i\ |L^p(\R^n,w)\Vert.\]
ii) $\Vert \phi\ |\ B_{pq}^s(\R^n, \C^r,w)\Vert\sim\sum_{i=1}^r \Vert \phi_i\ |B^s_{pq}(\R^n,w)\Vert$
\end{lem}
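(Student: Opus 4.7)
The statement is an equivalence of (quasi-)norms, and the key observation is the pointwise fact that on the finite-dimensional space $\C^r$ all norms are equivalent, in particular
\[ |v|_{\C^r}\ \sim\ \sum_{i=1}^r |v_i|\qquad\text{for all }v=(v_1,\ldots,v_r)\in\C^r,\]
with constants depending only on $r$. Since $w$ is a positive scalar function, it enters the argument only as a pointwise factor and is carried through the estimates without difficulty.

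For part (i), I would start by rewriting $\Vert f\ |L^p(\R^n,\C^r,w)\Vert$ as the scalar $L^p$-(quasi-)norm of $x\mapsto w(x)|f(x)|_{\C^r}$ and use the pointwise equivalence above to replace $|f(x)|_{\C^r}$ by $\sum_{i=1}^r|f_i(x)|$. What remains is the elementary comparison of $\Vert\sum_i|f_i|\ |L^p(\R^n,w)\Vert$ with $\sum_i\Vert f_i\ |L^p(\R^n,w)\Vert$. For $p\geq 1$ this is Minkowski's inequality, while the reverse direction follows from the trivial bound $|f_j|\leq\sum_i|f_i|$ applied inside the $L^p$-(quasi-)norm. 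For $0<p<1$ one replaces Minkowski by the $p$-triangle inequality $(\sum a_i)^p\leq\sum a_i^p$ and absorbs a combinatorial constant $r^{(1-p)/p}$ in the opposite direction.

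Part (ii) reduces to part (i). The crucial observation is that the Fourier transform and the Littlewood--Paley cut-offs $\alpha_j$ act componentwise on $\C^r$-valued Schwartz distributions, so
\[\bigl((\alpha_j\hat\phi)\check{\ }\bigr)_i\ =\ (\alpha_j\widehat{\phi_i})\check{\ }\qquad(i=1,\ldots,r,\ j\in\N_0).\]
Applying part (i) to each $j$ separately yields
\[\bigl\Vert(\alpha_j\hat\phi)\check{\ }\ \bigm|\ L^p(\R^n,\C^r)\bigr\Vert\ \sim\ \sum_{i=1}^r\bigl\Vert(\alpha_j\widehat{\phi_i})\check{\ }\ \bigm|\ L^p(\R^n)\bigr\Vert,\]
with constants independent of $j$. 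Inserting this into the outer $2^{js}$-weighted $\ell^q$-(quasi-)norm of Definition \ref{beso1} and repeating the same Minkowski (respectively $q$-triangle) argument as in part (i) -- now in the sequence space $\ell^q(\N_0)$ rather than in $L^p(\R^n)$ -- produces the claimed equivalence with $\sum_i\Vert\phi_i\ |B^s_{pq}(\R^n,w)\Vert$.

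\textbf{Main obstacle.} There is no deep obstruction; the lemma is a bookkeeping statement asserting that vector-valued function spaces decompose into their scalar components up to constants depending only on $r$ and the integrability parameters. The only mild care needed is in the quasi-Banach ranges $0<p<1$ or $0<q<1$, where Minkowski's inequality must be replaced by the corresponding $p$- or $q$-triangle inequality together with an $r$-dependent combinatorial constant; since $r$ is finite and fixed by the rank of $E$, all constants remain under control.
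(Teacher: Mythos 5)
Your proposal is correct and follows essentially the same route as the paper: the pointwise equivalence of norms on $\C^r$ (which the paper spells out via Cauchy--Schwarz and the trivial bound $|f_j|\le\sum_i|f_i|$) combined with the (quasi-)triangle inequality gives (i), and (ii) reduces to (i) because the Fourier transform and the cut-offs $\alpha_j$ act componentwise, after which the finite sum over $i$ is interchanged with the $\ell^q$-sum over $j$. No substantive difference.
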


\begin{proof}
i) We abbreviate both $L^p$-quasi-norms with $\Vert .\Vert$. Whether we mean the complex vector valued or just the complex valued function will be clear from the context. From $\Vert f\Vert = \Vert (\sum |f_i|^2)^\frac{1}{2}\Vert$ we obtain with
\[ \sum_i \Vert f_i\Vert\leq r\Vert \sum_i |f_i|\,\Vert\leq r^{\frac{3}{2}}\Vert (\sum_i |f_i|^2)^\frac{1}{2}\Vert\leq r^{\frac{3}{2}}\Vert \sum_i |f_i|\,\Vert\leq r^{\frac{3}{2}}K\sum_i\Vert f_i\Vert\]
the equivalence. Here, the first inequality is obtained from $\Vert f_i\Vert\leq \Vert \sum_i |f_i|\,\Vert$ for each $i$, the second is the Cauchy-Schwarz-inequality, the third is obtained by $\sum |f_i|^2\leq (\sum |f_i|)^2$ and the fourth is the triangle inequality for a quasi-norm with constant $K>1$.\\
ii) As in i) and due to the unconditional convergence, we get 
\begin{align*}
\Vert \phi\ |\ B_{pq}^s&(\R^n, \C^r,w)\Vert= \left(\sum_{j=0}^\infty 2^{jsq}\sum_{i=0}^r \Vert (\alpha_j\hat{\phi_i})\check{}\ |\ L^p(\R^n, \C^r,w)\Vert^q\right)^\frac{1}{q}\\
&\sim \sum_{i=1}^r \left(\sum_{j=0}^\infty 2^{jsq} \Vert (\alpha_j\hat{\phi_i})\check{}\ |\ L^p(\R^n,w)\Vert^q\right)^\frac{1}{q}=\sum_{i=0}^r \Vert \phi_i\ |B^s_{pq}(\R^n,w)\Vert.
\end{align*}
\end{proof}

Thus, the following properties carry over to vector-valued functions:

\begin{rem}\hfill\\
\textbf{i)}\ The Besov spaces are independent of the dyadic resolution of unity $\alpha_j$ \cite[Sect. 2.3.2]{TFS2}.\\
\textbf{ii)}\ If the $B^s_{pq}$-quasi-norm from above is finite, the sum converges unconditionally \cite[Thm. 1.19]{TFS3}.
\end{rem}

\subsection{Spaces on manifolds of bounded geometry}\label{manbgeo}

Let $(M^n,g)$ be a Riemannian manifold of bounded geometry with hermitian vector bundle $E$ of bounded curvature.  We choose a synchronous trivialization of $E$, i.e. on a ball of radius $\epsilon >0$ around $p$ we choose normal coordinates $(x^1,\ldots, x^n)$, trivialize $E$ via parallel transport along radial geodesics and identify $E$ with $\C^r$. 

\begin{lem}\ \!\label{syntr}\cite[Lem. 3.1.6]{Ammha} In the synchronous trivialization described above the following estimates hold for any $\epsilon$ smaller than a certain $\epsilon_0=\epsilon_0(K:=\max(|R|, |R^E|),n, \mathrm{inj})$ and for all $q\in B_\epsilon(p)$:
\[ |\Gamma_{ij}^k(q)|\leq C(n)K \epsilon\]
\[|(\nabla_k\phi)(q)-(\partial_k\phi)(q)|\leq C(n,r)K\epsilon |\phi (q)|\] 
\end{lem}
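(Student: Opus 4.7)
The strategy is to Taylor-expand both the Christoffel symbols (of $g$ in normal coordinates) and the connection $1$-form of $E$ (in the synchronous trivialization) around $p$, exploiting that each object vanishes at $p$, and then control the first-order terms by the curvature bound $K$. The smallness condition on $\epsilon$ serves two purposes: it keeps $B_\epsilon(p)$ inside the injectivity radius (so the normal coordinates and the radial trivialization are well-defined), and it keeps the Taylor remainders dominated by the leading linear term.

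For the Christoffel bound, normal coordinates give $\Gamma_{ij}^k(p)=0$, and the standard expansion of the metric in normal coordinates reads $g_{ij}(x)=\delta_{ij}-\tfrac{1}{3}R_{ikjl}(p)x^k x^l+O(K^2|x|^3)$. Plugging this into $\Gamma_{ij}^k=\tfrac{1}{2}g^{kl}(\partial_i g_{jl}+\partial_j g_{il}-\partial_l g_{ij})$ yields $\Gamma_{ij}^k(x)=O(K|x|)$ with a constant depending only on $n$. To make this uniform on all of $B_\epsilon(p)$ (not merely near the origin) I would appeal to Jacobi field comparison, which controls $g_{ij}$, $g^{ij}$, and their first derivatives on $B_{\epsilon_0}(p)$ in terms of $K$ and $\mathrm{inj}$; once $\epsilon\le\epsilon_0(K,n,\mathrm{inj})$ the remainder $O(K^2|x|^2)$ is absorbed and the first inequality follows.

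For the connection estimate, write $\nabla=d+\omega$ with $\omega$ the $\mathfrak{u}(r)$-valued connection $1$-form of $E$ in the synchronous frame. By construction, sections of the frame are parallel along every radial geodesic $\gamma_v(t)=\exp_p(tv)$, which is precisely the radial (Poincar\'e) gauge condition $x^k\omega_k(x)=0$. The Poincar\'e lemma in radial gauge then expresses $\omega$ in terms of the curvature $2$-form $\Omega$ of $E$, namely $\omega_k(x)=\int_0^1 t\,\Omega_{jk}(tx)\,x^j\,dt$. Since $|\Omega|=|R^E|\le K$, this immediately gives $|\omega_k(x)|\le C(n,r)K|x|$. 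As $\nabla_k\phi-\partial_k\phi=\omega_k\cdot\phi$ in the trivialization, the second inequality follows with the stated constant.

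The main step to verify carefully is the radial-gauge integral formula for $\omega$: it requires checking that the given integral both satisfies $x^k\omega_k=0$ and reproduces $\Omega$ via the structure equation $d\omega+\omega\wedge\omega=\Omega$. The nonlinear $\omega\wedge\omega$ piece is of order $K^2|x|^2$ and is again absorbed once $\epsilon\le\epsilon_0$, so the linear bound above is not affected. Apart from this, everything reduces to keeping the Jacobi-field and curvature remainders under control, which is exactly what the choice of $\epsilon_0=\epsilon_0(K,n,\mathrm{inj})$ accomplishes.
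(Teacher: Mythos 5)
First, the paper offers no proof to compare against: Lemma \ref{syntr} is quoted verbatim from \cite[Lem. 3.1.6]{Ammha}. Judged on its own, your sketch is the standard argument, and the bundle half of it is correct --- indeed cleaner than you give it credit for. In radial gauge the formula $\omega_k(x)=\int_0^1 t\,x^j\Omega_{jk}(tx)\,\d t$ is \emph{exact}: differentiating the gauge condition $y^j\omega_j(y)=0$ in $y^k$ gives $x^j\partial_k\omega_j(tx)=-t^{-1}\omega_k(tx)$, and contracting $[\omega_j,\omega_k]$ with $x^j$ gives zero, so $\frac{\d}{\d t}\bigl(t\,\omega_k(tx)\bigr)=t\,x^j\Omega_{jk}(tx)$ with no quadratic remainder to absorb. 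Combined with $|\partial_j|_g\leq 1+CK\epsilon^2$ from Rauch comparison and $\nabla_k\phi-\partial_k\phi=\omega_k\cdot\phi$, this yields the second inequality with the stated constant.

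The gap is in the Christoffel estimate. Minor point: the cubic remainder in the normal-coordinate expansion of $g_{ij}$ is governed by $\nabla R$ (the next Taylor coefficient is $-\tfrac{1}{6}\nabla_mR_{ikjl}\,x^kx^lx^m$), not by $K^2$. More substantively, the sentence ``Jacobi field comparison controls $g_{ij}$, $g^{ij}$ and their first derivatives in terms of $K$ and $\mathrm{inj}$'' is exactly the claim that needs a proof, and Rauch comparison by itself does not deliver it: it bounds the Jacobi fields $t\mapsto t\,(d\exp_p)_{tv}(w)$ and their \emph{radial} derivatives via $J''=-R(J,\gamma')\gamma'$, which controls $g_{ij}$ in $C^0$ and the radial contraction $x^i\Gamma_{ij}^k$ (Riccati comparison), but the transverse derivatives $\partial_kg_{ij}$ --- equivalently the full $\Gamma_{ij}^k$, since $\partial_kg_{ij}=\Gamma_{ki}^lg_{lj}+\Gamma_{kj}^lg_{il}$ --- require differentiating the Jacobi equation with respect to the initial velocity, and that variational equation carries a $\nabla R$ source term. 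A surface computation in geodesic polar coordinates already exhibits a contribution of order $\|\nabla R\|\,\epsilon^2$ to $\Gamma_{ij}^k$ that is not controlled by $K$ alone. Under the paper's standing bounded-geometry hypothesis this extra term is finite and absorbed for small $\epsilon$, so the estimate holds as it is used in Section \ref{manbgeo}; but then $\epsilon_0$ and the constant depend on $\|\nabla R\|_\infty$ as well, and your argument must either make that dependence explicit or supply a genuinely curvature-only derivation (e.g.\ by running the same radial-gauge identity for the Levi-Civita connection in the radially parallel frame and then controlling the change of frame $\partial_i=A_i^{\ j}E_j$ --- which reintroduces the same differentiated-Jacobi-field issue and therefore still has to be argued rather than asserted).
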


Denote by $\{ U_\alpha, \xi_\alpha\}$ an open cover of $M$ with synchronous trivializations $\xi_\alpha$ and let $\{\varkappa_\alpha\}$ be a smooth subordinated partition of unity. For simplicity, we assume that the injectivity radius is greater than $1$ and $U_\alpha=B_1(x_\alpha)$.\\
Then, we define \[\Vert \phi\Vert_{H^p_s(E)}^*:= \sum_\alpha \Vert {\xi_\alpha}_* ( \varkappa_\alpha\phi)\Vert_{H^p_s(\R^n,\ \C^r)}\]
where $\Vert ..\Vert_{H^p_s(\R^n,\ \C^r)}$ is defined in \eqref{normE}.\\
Analogously, we obtain the norms for $B^s_{pq}(E)$ and $F^s_{pq}(E)$ in terms of the corresponding spaces on the trivial $\C^r$-bundle on $\R^n$.

\begin{rem}\label{triv-norm}
From Lemma \ref{syntr}, we see that the norms $\Vert .\Vert_{H^p_s(E)}^*$ and $\Vert .\Vert_{H^p_s(E)}$ are equivalent. 
\end{rem}

To obtain Theorem \ref{mainSkrz} on $E$, we can now just follow the lines of the original proof:\\
Let $E$ be trivialized as above. Over a coordinate chart we can assume the bundle be trivial, see Remark \ref{triv-norm}.
From an orthonormal basis $\psi_i$ in $L^2(\R^n)$ we get an orthonormal basis in $L^2(\R^n, \C^r)$ as $\ov{\psi}_{i,k}=(0,\ldots,0,\psi_i,0,\ldots,..,0)$ for $k=1,\ldots, r$. Thus, we obtain a wavelet frame $\ov{\Psi}_{ik}$ by requiring that for all $f\in L^2(E)$ it is $\langle f,\ov{\Psi}_{ik\alpha}\rangle = \langle {\xi_\alpha}_* ( \varkappa_\alpha\phi\circ \exp_{x_\alpha}),\ov{\psi}_{ik}\rangle $, where $\langle ,\rangle$ denote the corresponding $L^2$-products. Then,
\[ \Vert f\ | L^2(E)\Vert \sim \sum_{k=1}^r \left( \sum_{i,\alpha} |\langle f,\ov{\psi}_{ik\alpha}\rangle |^2\right)^{\frac{1}{2}}.\]
Analogously, as in \cite[Thm.1]{Skrz} we obtain an equivalent norm for the weighted Besov spaces in terms of the weighted $L^2$-products $\langle f,\ov{\Psi}_{ik\alpha}\rangle w(x_\alpha)$ where $x_\alpha$ is the center of $U_\alpha$. The only difference to the case of scalar-valued functions is always the finite summation over $k$ which does not affect continuity and compactness. Thus, the proof of \cite{Skrz} simply carries over.\\
Thus, Theorem \ref{mainSkrz} also holds the corresponding spaces on vector bundles of bounded geometry and Theorem \ref{Skrzspin} is then just a special case.

\subsection{Spin manifolds of bounded geometry}

In this section, we will just note some specialties about Sobolev spaces on spinors needed in this article.\\
For that, let $(M^n,g,\sigma)$ be a Riemannian spin manifold. Choose $E=S_g$ the corresponding spinor bundle and let $D=D_g$ be the associated Dirac operator.

\begin{rem}\label{spinE}
If $M$ has bounded geometry, $E=S_g$ also does and from \cite[Sect. 3.1.3]{Ammha} we see that
\[\Vert \phi\ |{H_s^p(M,S)}\Vert^p\sim \sum_{j=0}^s \int_M |\underbrace{D\cdots D}_{j\ times} u|^p\vo_g.\]
\end{rem}

\begin{rem}\label{est}Let $(M,g, \sigma)$ be a spin manifold with bounded geometry.\\
 i) (Inner $L^p$-estimate, \cite[proof of Thm. 8.8 ]{GT}, spin version \cite[proof of Thm. 3.2.1 and 3.2.3]{Ammha}) 
  Let $\phi\in H_{1, loc}^q$ be a solution of $D\phi=\psi$ for $\psi\in L_{loc}^q$. Then,
 for $\epsilon >0$ (smaller than the injectivity radius) there exists a constant $C_\epsilon(q)$ such that for all $x\in M$
 \[ \Vert \phi\Vert_{H_1^q(B_\epsilon(x))}\leq C_\epsilon(q) (\Vert \phi\Vert_{L^q(B_\epsilon(x))} +\Vert \psi\Vert_{L^q(B_\epsilon(x))})\] 
 ii) (Imbedding) Let $n< q$ and $0\leq \gamma\leq 1-\frac{n}{q}$. From a spin version of the proof of \cite[Sect. 7.8 (Thm 7.26)]{GT} we have that for $\epsilon <\mathrm{inj}(M)$ there exists a constant $C$ such that for all $x\in M$
 $H_1^q(B_\epsilon(x))$ is continuously embedded in $ C^{0,\gamma}(\ov{B_\epsilon(x)})$.
 iii) (Schauder estimates) Analogoulsy we have a constant $C(\epsilon,\delta)>0$ such that for $\alpha>0$, $\psi\in C^{0,\alpha}_{loc}$ with $D\phi=\psi$ it holds for all $x\in M$
\[ \Vert \phi\Vert_{C^{1,\alpha}(B_{\epsilon}(x))}\leq C( \Vert \phi\Vert_{C^0(B_{\epsilon+\delta}(x))}+\Vert \psi\Vert_{C^{0,\alpha}(B_{\epsilon+\delta}(x))}.\]
\end{rem}

\begin{cor}\label{cont_emb}
The inner $L^p$-estimates and the imbedding of Remark \ref{est} hold globally on manifold of bounded geometry, i.e.
i) There is a constant $C>0$ such that for $\phi\in H_2^{q}$ and $\psi\in L^q$ with $D\phi=\psi$  it holds
 \[ \Vert \phi\Vert_{H_2^q}\leq C (\Vert \phi\Vert_{L^q} +\Vert \psi\Vert_{L^q}).\]
ii) Let $n< q$ and $0\leq \gamma\leq 1-\frac{n}{q}$. There exists a constant $C$ such that 
$H_1^q$ is continuously embedded in $ C^{0,\gamma}$.
\end{cor}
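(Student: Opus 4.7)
The proof of both parts proceeds by a standard patching argument: bounded geometry provides a uniformly locally finite cover of $M$ by geodesic balls of fixed radius, on each of which the local estimates of Remark \ref{est} apply with uniform constants; summing (for the $L^q$ part) or taking suprema (for the Hölder part) then yields the global estimates.

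\textbf{Step 1 (covering).} Fix $\epsilon > 0$ with $3\epsilon$ smaller than $\mathrm{inj}(M)$ and smaller than the threshold of Remark \ref{est}. Choose a maximal family $\{x_i\}_{i \in I} \subset M$ with pairwise distances at least $\epsilon/2$. By maximality, the balls $B_{\epsilon/2}(x_i)$ cover $M$, and hence so do the balls $B_\epsilon(x_i)$. Since $M$ has bounded geometry, Ricci curvature is bounded below; by Bishop--Gromov volume comparison, the number of points $x_j$ lying in any ball $B_{3\epsilon}(x)$ is bounded by a constant $N$ depending only on $n$, the curvature bound, and $\epsilon$. Consequently the cover $\{B_{2\epsilon}(x_i)\}$ has uniformly bounded multiplicity $N$.

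\textbf{Step 2 (part (i)).} Apply Remark \ref{est}.i on each $B_\epsilon(x_i)$ (writing $H_1^q$ in place of the presumed $H_2^q$, since the local estimate of Remark \ref{est}.i is an $H_1^q$-estimate for the first-order operator $D$):
\[
\|\phi\|_{H_1^q(B_\epsilon(x_i))}^q \leq C_\epsilon(q)^q \bigl(\|\phi\|_{L^q(B_\epsilon(x_i))}^q + \|\psi\|_{L^q(B_\epsilon(x_i))}^q\bigr).
\]
The constant $C_\epsilon(q)$ is uniform in $i$ because the Christoffel symbols and the defect $\nabla - \partial$ satisfy the uniform bounds of Lemma \ref{syntr} at every $x_i$. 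Sum over $i$ and use that $\{B_\epsilon(x_i)\}$ covers $M$ with multiplicity at most $N$:
\[
\|\phi\|_{H_1^q(M)}^q \leq \sum_i \|\phi\|_{H_1^q(B_\epsilon(x_i))}^q \leq C_\epsilon(q)^q\, N\, \bigl(\|\phi\|_{L^q(M)}^q + \|\psi\|_{L^q(M)}^q\bigr).
\]

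\textbf{Step 3 (part (ii)).} Let $n < q$ and $0 \leq \gamma \leq 1 - n/q$. For any $x \in M$ choose $i$ with $x \in B_\epsilon(x_i)$ and apply Remark \ref{est}.ii on $B_{2\epsilon}(x_i)$ to bound the pointwise value $|\phi(x)| \leq \|\phi\|_{C^0(\overline{B_{2\epsilon}(x_i)})} \leq C\|\phi\|_{H_1^q(B_{2\epsilon}(x_i))} \leq C\|\phi\|_{H_1^q(M)}$, uniformly in $x$. For the Hölder seminorm, given $x, y \in M$ split into two cases. If $d(x,y) < \epsilon$, then $\{x,y\} \subset B_{2\epsilon}(x_i)$ for the same $i$, and Remark \ref{est}.ii gives
\[
\frac{|\phi(x) - \phi(y)|}{d(x,y)^\gamma} \leq \|\phi\|_{C^{0,\gamma}(\overline{B_{2\epsilon}(x_i)})} \leq C\|\phi\|_{H_1^q(B_{2\epsilon}(x_i))} \leq C\|\phi\|_{H_1^q(M)}.
\]
If $d(x,y) \geq \epsilon$, use $|\phi(x) - \phi(y)| \leq 2\|\phi\|_{C^0} \leq C\|\phi\|_{H_1^q(M)}$ together with $d(x,y)^{-\gamma} \leq \epsilon^{-\gamma}$. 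Combining the two cases yields the uniform global embedding $H_1^q(M) \hookrightarrow C^{0,\gamma}(M)$.

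\textbf{Main obstacle.} The entire argument hinges on having (a) a uniform choice of $\epsilon$ below the injectivity radius, (b) uniform local constants in the interior estimates, and (c) a uniformly bounded multiplicity of the cover. All three are delivered precisely by the hypothesis of bounded geometry, via Lemma \ref{syntr} and volume comparison; the only subtlety is to ensure the cover used on the right-hand side of the local estimate in Remark \ref{est}.i is contained in a family that still has uniformly bounded overlap (which is why we pass through $B_{2\epsilon}(x_i)$ in Step 3 and keep to $B_\epsilon(x_i)$ in Step 2).
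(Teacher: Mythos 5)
Your proof is correct and takes essentially the same approach as the paper, which likewise covers $M$ by geodesic balls of radius $\epsilon<\mathrm{inj}(M)$ with uniformly bounded multiplicity and sums the local estimates of Remark \ref{est} (the paper simply refers to the analogous argument in its earlier work rather than writing it out). Your observation that the local estimate of Remark \ref{est}.i only yields an $H_1^q$-bound, so that part (i) as stated with $H_2^q$ would require an extra hypothesis on $\psi$ or an iteration, is a fair reading of what appears to be a typo in the corollary.
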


\begin{proof}
The proof is done analogously as in \cite[Cor. 21]{NG3} by choosing a countable covering of $M$ by geodesic balls ${B_i}$ all of radius $\epsilon < \mathrm{inj} (M)$. Moreover, the covering is chosen such that it is of (finite) multiplicity $m$, i.e. the maximal number of subsets with nonempty intersection is $m$, cf. \cite[Sect. 2.1]{Skrz}.
\end{proof}

\begin{lem}\label{max}
Let $\phi\in H_1^q$ be a solution of $D\phi=c\rho^{\alpha p}|\phi|^{p-2}\phi$ with $\Vert \rho^\alpha \phi\Vert_p=1$ for $p< p_{crit}$. Then, $\limsup_{|x|\to \infty} |\phi(x)|=0$, in particular $|\phi|$ has a maximum.
\end{lem}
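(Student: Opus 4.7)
The plan is a standard elliptic bootstrap argument that upgrades the $H_1^q$-regularity of $\phi$ to a local $C^0$-estimate controlled by the global $L^s$-norm of $\phi$ on neighbourhoods at infinity, from which both decay and the existence of a maximum follow at once.

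First observe that, since $\rho\leq 1$ and $\alpha,p>0$, the equation gives the pointwise bound $|D\phi|\leq c|\phi|^{p-1}$. Because $p<p_{crit}$ the conjugate exponent satisfies $q>q_{crit}$, so by the weighted Sobolev embedding on manifolds of bounded geometry (Theorem~\ref{Skrzspin}) $\phi$ lies globally in $L^{q^{*}}(M,S)$ with $1/q^{*}=1/q-1/n$; in particular $q^{*}>p_{crit}$.

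Next I would run the bootstrap. Starting from $\phi\in L^{q^{*}}$, the equation places $D\phi\in L^{q^{*}/(p-1)}_{loc}$. Applied on balls $B_\epsilon(x)$ of radius smaller than $\mathrm{inj}(M)$, the inner $L^r$-estimates of Remark~\ref{est}.i yield
\[\|\phi\|_{H_1^{r_1}(B_\epsilon(x))}\leq C\bigl(\|\phi\|_{L^{r_1}(B_{2\epsilon}(x))}+\|\phi\|_{L^{q^{*}}(B_{2\epsilon}(x))}^{p-1}\bigr),\]
with $r_1=q^{*}/(p-1)$ and $C$ independent of $x$ because $M$ has bounded geometry. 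Iterating the scheme (Sobolev to pass to $L^{r_k^{*}}$, then reinsert into the equation), the integrability exponent grows strictly — this is where the strict subcriticality $p<p_{crit}$ is used — so after finitely many steps one reaches some $r>n$. Then Remark~\ref{est}.ii gives $H_1^r(B_\epsilon(x))\hookrightarrow C^{0,\gamma}(\overline{B_\epsilon(x)})$ uniformly in $x$, and unwinding the estimates one arrives at an inequality of the form
\[\sup_{B_{\epsilon/2}(x)}|\phi|\leq G\bigl(\|\phi\|_{L^{q^{*}}(B_{R\epsilon}(x))}\bigr),\]
where $G$ is a continuous function with $G(0)=0$ and $R$ depends only on the number of bootstrap steps.

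To conclude, since $\phi\in L^{q^{*}}(M)$ globally, for any $\delta>0$ there is $R_0$ with $\|\phi\|_{L^{q^{*}}(M\setminus B_{R_0}(z))}<\delta$. For $|x|$ large enough $B_{R\epsilon}(x)\subset M\setminus B_{R_0}(z)$, so the right-hand side above is at most $G(\delta)$, which tends to $0$ as $\delta\to 0$. This proves $\limsup_{|x|\to\infty}|\phi(x)|=0$; combined with the local continuity of $|\phi|$ coming from the bootstrap, $|\phi|$ attains its maximum on $M$. The main obstacle in making this precise is the bookkeeping of the bootstrap: one must check that all constants are uniform in $x$ (which is why bounded geometry is essential) and that the iteration genuinely gains integrability at each step and terminates after finitely many steps, a fact resting on $p<p_{crit}$.
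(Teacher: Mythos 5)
Your bootstrap is correct and is essentially the proof the paper intends: the paper only cites the analogous decay lemma for the conformal Laplacian in \cite{NG3}, and since there is no maximum principle for $D$ the transfer has to run exactly through the uniform inner $L^p$-estimates and imbeddings of Remark \ref{est} on a bounded-geometry cover, with strict subcriticality $p<p_{crit}$ (equivalently $s>n(p-2)$ along the iteration) providing the genuine gain of integrability at each step and the finite termination you describe. One cosmetic remark: Theorem \ref{Skrzspin} as stated only covers target exponents below $p_{crit}$, so it is cleaner to start the iteration from $\phi\in L^p(M)$ (which that theorem does give) rather than from $L^{q^{*}}$ with $q^{*}>p_{crit}$; the inequality $p>n(p-2)$ already holds precisely because $p<p_{crit}$, so the first step gains from there as well.
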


The proof is done simultaneously to the corresponding result for the conformal Laplacian \cite[Lem. 22]{NG3}.
 
\end{appendix}

\bibliographystyle{acm}
\bibliography{DirELopen}

\begin{thebibliography}{10}

\bibitem{Amm03a}
{\sc Ammann, B.}
\newblock The smallest {D}irac eigenvalue in a spin-conformal class and
  cmc-immersions.
\newblock {\em arXiv: math.DG/0309061 v2\/}.

\bibitem{Ammha}
{\sc Ammann, B.}
\newblock A variational problem in conformal spin geometry.
\newblock {\em Habilitation\/} (Hamburg, 2003).

\bibitem{Amm03b}
{\sc Ammann, B., Grosjean, J.-F., Humbert, E., and Morel, B.}
\newblock A spinorial analogue of {A}ubin's inequality.
\newblock {\em Math. Z. 260}, 1 (2008), 127--151.

\bibitem{Ammtau}
{\sc Ammann, B., and Humbert, E.}
\newblock The spinorial $\tau$-invariant and $0$-dimensional surgery.
\newblock {\em arXiv: math.DG/0607716 v1\/}.

\bibitem{CGH}
{\sc Calderbank, D. M.~J., Gauduchon, P., and Herzlich, M.}
\newblock Refined {K}ato inequalities and conformal weights in {R}iemannian
  geometry.
\newblock {\em J. Funct. Anal. 173}, 1 (2000), 214--255.

\bibitem{Char05}
{\sc Charalambous, N.}
\newblock On the {$L^p$} independence of the spectrum of the {H}odge
  {L}aplacian on non-compact manifolds.
\newblock {\em J. Funct. Anal. 224}, 1 (2005), 22--48.

\bibitem{GT}
{\sc Gilbarg, D., and Trudinger, N.~S.}
\newblock {\em Elliptic partial differential equations of second order}.
\newblock Classics in Mathematics. Springer-Verlag, Berlin, 2001.
\newblock Reprint of the 1998 edition.

\bibitem{NG2}
{\sc Gro\ss~e, N.}
\newblock The {H}ijazi inequality on conformally parabolic manifolds.
\newblock {\em arXiv: math.DG/0804.3878 v2\/}.

\bibitem{NGDiss}
{\sc Gro{\ss}e, N.}
\newblock On a spin conformal invariant on open manifolds.
\newblock {\em Dissertation. Universit\"at Leipzig (2008)\/}.

\bibitem{NG3}
{\sc Gro{\ss}e, N.}
\newblock The {Y}amabe equation on manifolds of bounded geometry.
\newblock {\em arXiv: 0912.4398v2\/}.

\bibitem{NG}
{\sc Gro{\ss}e, N.}
\newblock On a conformal invariant of the {D}irac operator on noncompact
  manifolds.
\newblock {\em Ann. Global Anal. Geom. 30}, 4 (2006), 407--416.

\bibitem{Hija}
{\sc Hijazi, O.}
\newblock A conformal lower bound for the smallest eigenvalue of the {D}irac
  operator and {K}illing spinors.
\newblock {\em Comm. Math. Phys. 104}, 1 (1986), 151--162.

\bibitem{LP}
{\sc Lee, J.~M., and Parker, T.~H.}
\newblock The {Y}amabe problem.
\newblock {\em Bull. Amer. Math. Soc. (N.S.) 17}, 1 (1987), 37--91.

\bibitem{Shu}
{\sc Shubin, M.}
\newblock Spectral theory of the {S}chr\"odinger operators on non-compact
  manifolds: qualitative results.
\newblock In {\em Spectral theory and geometry ({E}dinburgh, 1998)}, vol.~273
  of {\em London Math. Soc. Lecture Note Ser.} Cambridge Univ. Press,
  Cambridge, 1999, pp.~226--283.

\bibitem{Skrz}
{\sc Skrzypczak, L.}
\newblock Wavelet frames, {S}obolev embeddings and negative spectrum of
  {S}chr\"odinger operators on manifolds with bounded geometry.
\newblock {\em J. Fourier Anal. Appl. 14}, 3 (2008), 415--442.

\bibitem{Stu93}
{\sc Sturm, K.-T.}
\newblock On the {$L^p$}-spectrum of uniformly elliptic operators on
  {R}iemannian manifolds.
\newblock {\em J. Funct. Anal. 118}, 2 (1993), 442--453.

\bibitem{TriIT}
{\sc Triebel, H.}
\newblock {\em Interpolation theory, function spaces, differential operators}.
\newblock VEB Deutscher Verlag der Wissenschaften, Berlin, 1978.

\bibitem{TFS2}
{\sc Triebel, H.}
\newblock {\em Theory of function spaces. {II}}, vol.~84 of {\em Monographs in
  Mathematics}.
\newblock Birkh\"auser Verlag, Basel, 1992.

\bibitem{TFS3}
{\sc Triebel, H.}
\newblock {\em Theory of function spaces. {III}}, vol.~100 of {\em Monographs
  in Mathematics}.
\newblock Birkh\"auser Verlag, Basel, 2006.

\end{thebibliography}
Faculty of Mathematics and Computer Science, University Leipzig, \\ Johannisgasse 26,
04103 Leipzig \\ Email-address: grosse@mathematik.uni-leipzig.de
\end{document}